\tikzstyle{rn}=[circle,fill=white,draw=black,line width=0.6 pt]
\tikzstyle{standardedge}=[-,draw=black,line width=2.000]
\theoremstyle{plain}
\newtheorem{theorem}{Theorem}
\newtheorem{lemma}[theorem]{Lemma}
\newtheorem{fact}[theorem]{Fact}
\theoremstyle{definition}
\definecolor{lightblue}{rgb}{0.5,0.5,1.0}
\definecolor{darkred}{rgb}{0.5,0,0}
\definecolor{darkgreen}{rgb}{0,0.5,0}
\definecolor{darkblue}{rgb}{0,0,0.5}
\definecolor{gray}{gray}{0.3}
\providecommand*{\toclevel@algorithm}{0}
\title{Minimal Asymmetric Graphs 
}
\author{Pascal Schweitzer \\ 
RWTH Aachen University\\
{\tt schweitzer@informatik.rwth-aachen.de}
\and Patrick Schweitzer\\
\\
{\tt patrick.schweitzer@gmail.com}}
\begin{document}
\maketitle

\begin{abstract}
Confirming a conjecture of Ne\v{s}et\v{r}il, we show that up to isomorphism there is only a finite number of finite minimal asymmetric undirected graphs. In fact, there are exactly 18 such graphs. We also show that these graphs are exactly the finite minimal involution-free graphs.
\end{abstract}

\section{Introduction}

A graph is \emph{asymmetric} if it does not have a nontrivial automorphism. In this paper, we are interested in asymmetric graphs that are as small as possible. An undirected graph~$G$ on at least two vertices is \emph{minimal asymmetric} if~$G$ is asymmetric and no proper induced subgraph of~$G$ on at least two vertices is asymmetric. 

In 1988 Ne\v{s}et\v{r}il conjectured at an Oberwolfach Seminar that there exists only a finite number of finite minimal asymmetric graphs, see~\cite{MR1426910}. Since then Ne\v{s}et\v{r}il and Sabidussi have made significant progress on the conjecture. They showed that there are exactly nine minimal asymmetric graphs containing~$P_5$, the path of length~$4$, as an induced subgraph \cite{MR0307955,DBLP:journals/jct/Sabidussi91,DBLP:journals/gc/NesetrilS92} and identified 18 minimal asymmetric graphs in total.
However, the conjecture has remained open over the years and has been mentioned in various other publications \cite{MR1829620, MR2249289,DBLP:journals/dm/Nesetril09, MR2920058}.
Coincidentally, Ne\v{s}et\v{r}il mentioned the open conjecture as recent as 2016 at an Oberwolfach Seminar. We now confirm the conjecture.

\begin{theorem}\label{thm:min_asym_graphs}
There are exactly 18 finite minimal asymmetric undirected graphs up to isomorphism. These are the 18 graphs depicted in Figure~\ref{fig:min_asym_graphs}.
\end{theorem}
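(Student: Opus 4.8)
The plan is to prove the theorem in two halves: that each of the $18$ graphs in Figure~\ref{fig:min_asym_graphs} is minimal asymmetric, and that there are no further minimal asymmetric graphs. The first half is a finite verification: for each listed graph $G$ one confirms that $\mathrm{Aut}(G)$ is trivial and that every proper induced subgraph of $G$ on at least two vertices admits a nontrivial automorphism. Since there are no asymmetric graphs on fewer than six vertices, this reduces to checking a bounded number of small induced subgraphs (in particular all $G-v$), and is routine. All the substance is in the second half.

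For the upper bound I would first record two structural reductions. The class of asymmetric graphs is closed under complementation, and ``being a proper induced subgraph on at least two vertices'' is complementation-invariant, so the class of minimal asymmetric graphs is closed under complementation; since $P_5$ is self-complementary, so is the property of containing no induced $P_5$. By the theorem of Ne\v{s}et\v{r}il and Sabidussi there are exactly nine minimal asymmetric graphs with an induced $P_5$, all of which appear in Figure~\ref{fig:min_asym_graphs}, so it remains to show that there are exactly nine minimal asymmetric graphs that are $P_5$-free. The second reduction is a lemma: every minimal asymmetric graph is connected, co-connected (its complement is connected), and prime with respect to modular decomposition. Connectedness follows because in a disconnected asymmetric graph the connected components are pairwise non-isomorphic and each asymmetric, so a component with at least two vertices would be a proper asymmetric induced subgraph, contradicting minimality, while if every component is a single vertex the graph is edgeless on at least two vertices and hence not asymmetric; co-connectedness follows by complementation. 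For primeness, if $M$ were a module with $2 \le |M| \le |V(G)| - 1$, then a nontrivial automorphism of $G[M]$, which exists by minimality, would extend to $G$ by acting as the identity outside $M$, contradicting asymmetry of $G$.

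The core of the argument is the classification of the prime, connected, co-connected, $P_5$-free graphs $G$ in which every proper induced subgraph on at least two vertices has a nontrivial automorphism. My plan here is: for each vertex $v$ fix a nontrivial automorphism $\sigma_v$ of $G - v$; since $G$ is asymmetric, $\sigma_v$ cannot fix the neighborhood $N_G(v)$ setwise. These near-automorphisms, played against the rigid combinatorics of $P_5$-free graphs --- a connected $P_5$-free graph has a dominating clique or a dominating $P_3$, its modular structure is severely restricted by primeness, and both $G$ and its complement are $P_5$-free --- should force the vertex neighborhoods of $G$ to fall into only boundedly many types that moreover differ in only boundedly many places, yielding an explicit constant upper bound on $|V(G)|$. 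Turning the local flexibility of the vertex-deleted subgraphs into an absolute bound on $|V(G)|$ is where essentially all the difficulty lies: I expect this to require a long case analysis organized along the neighborhood and modular decomposition of $P_5$-free graphs, and it is the main obstacle.

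Once $|V(G)|$ is bounded by a small explicit constant, the proof concludes by enumerating all graphs up to that size, keeping those that are asymmetric with every proper induced subgraph on at least two vertices non-asymmetric, and checking that precisely the nine $P_5$-free graphs of Figure~\ref{fig:min_asym_graphs} survive. This enumeration is finite and can be carried out by hand in the relevant cases or verified by computer. Together with the nine $P_5$-containing graphs of Ne\v{s}et\v{r}il and Sabidussi this accounts for all $18$ graphs and shows there are no others. As a consistency check I would finally confirm that the resulting list of $18$ graphs is closed under complementation, as the first reduction forces.
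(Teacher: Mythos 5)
Your proposal is not a proof: the step that carries all the content is left as an unfulfilled intention. Verifying the 18 graphs and the reductions (connected, co-connected, prime, and the appeal to Ne\v{s}et\v{r}il--Sabidussi so that only $P_5$-free candidates remain) are fine, but the core claim --- that playing a nontrivial automorphism $\sigma_v$ of each $G-v$ against $P_5$-free structure ``should force'' boundedly many neighborhood types and hence an explicit bound on $|V(G)|$ --- is asserted, not argued, and you yourself flag it as the main obstacle. There is no known short route from ``every vertex-deleted subgraph has a nontrivial automorphism'' to a size bound, and the paper does not obtain (or need) any bound on $|V(G)|$ at all, nor any enumeration. Instead it works with minimal \emph{involution}-free graphs (transferring to asymmetric graphs via Lemma~\ref{lem:min:inv:free:impl:asym}), shows candidates are $(P_5,\text{co-}P_5,C_5)$-free, and then derives a contradiction purely structurally: primeness plus a $C_4$ forces an induced letter-$A$-graph (Lemma~\ref{lem:4cycle:house:domnino}), a careful analysis of its one-vertex extensions (Lemmas~\ref{lem:extensions:of:A} and~\ref{lem:extensions:of:A:improved}) shows any new example must be bipartite, co-bipartite or split (Lemma~\ref{lem:the_smoking_lemma}), while dominating-clique and twin-extension arguments exclude bipartite, co-bipartite and split examples (Lemmas~\ref{lem:bipartite_implies_P5_in_C} and~\ref{lem:not_split_in_C}). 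So the class of further examples is empty with no explicit constant ever appearing; your plan would still have to invent the entire bounding argument, which is precisely the hard part.

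A smaller but genuine error: $P_5$ is \emph{not} self-complementary (its complement is the house), so ``$P_5$-free'' is not a complementation-invariant property and your consistency argument based on that claim fails. The correct statement, which the paper uses crucially, is that any further minimal example must be both $P_5$-free and co-$P_5$-free; losing the co-$P_5$-freeness would also cost you the $C_5$-freeness (via Fouquet) and the dominating-clique machinery that any successful structural attack along these lines seems to need.
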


\begin{figure}[htb]
\centering
\captionsetup[subfloat]{labelformat=empty, justification=Centering}
\subfloat[$X_1$ 
$(6,6,X_8)$\label{subfig-1}]{
	\begin{tikzpicture}
	\begin{pgfonlayer}{nodelayer}
		\node [style=rn] (0) at (-2, -0) {};
		\node [style=rn] (1) at (-2, 1) {};
		\node [style=rn] (2) at (-2.5, 2) {};
		\node [style=rn] (3) at (-3, 1) {};
		\node [style=rn] (4) at (-3, -0) {};
		\node [style=rn] (5) at (-3, -1) {};
	\end{pgfonlayer}
	\begin{pgfonlayer}{edgelayer}
		\draw [style=standardedge] (0) to (1);
		\draw [style=standardedge] (1) to (2);
		\draw [style=standardedge] (1) to (3);
		\draw [style=standardedge] (3) to (4);
		\draw [style=standardedge] (4) to (5);
		\draw [style=standardedge] (3) to (2);
	\end{pgfonlayer}
\end{tikzpicture}
}
\quad
\subfloat[$X_2$ 
$(6,7,X_7)$\label{subfig-2}]{%
	\begin{tikzpicture}
	\begin{pgfonlayer}{nodelayer}
		\node [style=rn] (0) at (-2.5, 2) {};
		\node [style=rn] (1) at (-3, 1) {};
		\node [style=rn] (2) at (-2, -0) {};
		\node [style=rn] (3) at (-3, -0) {};
		\node [style=rn] (4) at (-3, -1) {};
		\node [style=rn] (5) at (-2, 1) {};
	\end{pgfonlayer}
	\begin{pgfonlayer}{edgelayer}
		\draw [style=standardedge] (0) to (1);
		\draw [style=standardedge] (1) to (3);
		\draw [style=standardedge] (3) to (4);
		\draw [style=standardedge] (3) to (2);
		\draw [style=standardedge] (1) to (5);
		\draw [style=standardedge] (5) to (2);
		\draw [style=standardedge] (0) to (5);
	\end{pgfonlayer}
\end{tikzpicture}
}
\quad
\subfloat[$X_3$ 
$(6,7,X_6)$\label{subfig-3}]{%
	\begin{tikzpicture}
	\begin{pgfonlayer}{nodelayer}
		\node [style=rn] (0) at (-3, -3) {};
		\node [style=rn] (1) at (-2, -1) {};
		\node [style=rn] (2) at (-3, -2) {};
		\node [style=rn] (3) at (-3, -1) {};
		\node [style=rn] (4) at (-2, -3) {};
		\node [style=rn] (5) at (-2, -2) {};
	\end{pgfonlayer}
	\begin{pgfonlayer}{edgelayer}
		\draw [style=standardedge] (2) to (3);
		\draw [style=standardedge] (5) to (1);
		\draw [style=standardedge] (3) to (1);
		\draw [style=standardedge] (2) to (5);
		\draw [style=standardedge] (5) to (0);
		\draw [style=standardedge] (0) to (2);
		\draw [style=standardedge] (4) to (5);
	\end{pgfonlayer}
\end{tikzpicture}
}
\quad
\subfloat[$X_4$ 
$(6,7,X_5)$\label{subfig-4}]{%
	\begin{tikzpicture}
	\begin{pgfonlayer}{nodelayer}
		\node [style=rn] (0) at (-1, 1) {};
		\node [style=rn] (1) at (-2, -1) {};
		\node [style=rn] (2) at (-1, -0) {};
		\node [style=rn] (3) at (-1, -1) {};
		\node [style=rn] (4) at (-2, 1) {};
		\node [style=rn] (5) at (-2, -0) {};
	\end{pgfonlayer}
	\begin{pgfonlayer}{edgelayer}
		\draw [style=standardedge] (2) to (3);
		\draw [style=standardedge] (5) to (1);
		\draw [style=standardedge] (2) to (5);
		\draw [style=standardedge] (5) to (0);
		\draw [style=standardedge] (0) to (2);
		\draw [style=standardedge] (4) to (5);
		\draw [style=standardedge] (0) to (4);
	\end{pgfonlayer}
\end{tikzpicture}
}
\quad
\subfloat[$X_5$
$(6,8,X_4)$\label{subfig-5}]{%
	\begin{tikzpicture}
	\begin{pgfonlayer}{nodelayer}
		\node [style=rn] (0) at (-3, -0) {};
		\node [style=rn] (1) at (-3, -1) {};
		\node [style=rn] (2) at (-3, 1) {};
		\node [style=rn] (3) at (-2.5, 2) {};
		\node [style=rn] (4) at (-2, -0) {};
		\node [style=rn] (5) at (-2, 1) {};
	\end{pgfonlayer}
	\begin{pgfonlayer}{edgelayer}
		\draw [style=standardedge] (2) to (3);
		\draw [style=standardedge] (2) to (5);
		\draw [style=standardedge] (5) to (0);
		\draw [style=standardedge] (0) to (2);
		\draw [style=standardedge] (4) to (5);
		\draw [style=standardedge] (0) to (4);
		\draw [style=standardedge] (5) to (3);
		\draw [style=standardedge] (0) to (1);
	\end{pgfonlayer}
\end{tikzpicture}
}
\quad
\subfloat[$X_6$
$(6,8,X_3)$\label{subfig-6}]{%
	\begin{tikzpicture}
	\begin{pgfonlayer}{nodelayer}
		\node [style=rn] (0) at (-3, 1) {};
		\node [style=rn] (1) at (-2, 1) {};
		\node [style=rn] (2) at (-3, -0) {};
		\node [style=rn] (3) at (-2, -0) {};
		\node [style=rn] (4) at (-2.5, 2) {};
		\node [style=rn] (5) at (-2, -1) {};
	\end{pgfonlayer}
	\begin{pgfonlayer}{edgelayer}
		\draw [style=standardedge] (0) to (2);
		\draw [style=standardedge] (2) to (3);
		\draw [style=standardedge] (2) to (1);
		\draw [style=standardedge] (0) to (4);
		\draw [style=standardedge] (4) to (1);
		\draw [style=standardedge] (5) to (3);
		\draw [style=standardedge] (0) to (1);
		\draw [style=standardedge] (1) to (3);
	\end{pgfonlayer}
\end{tikzpicture} 
}
\quad
\subfloat[$X_7$
$(6,8,X_2)$\label{subfig-7}]{%
	\begin{tikzpicture}
	\begin{pgfonlayer}{nodelayer}
		\node [style=rn] (0) at (-3, 1) {};
		\node [style=rn] (1) at (-2, 2) {};
		\node [style=rn] (2) at (-3, 2) {};
		\node [style=rn] (3) at (-3, 3) {};
		\node [style=rn] (4) at (-2, 1) {};
		\node [style=rn] (5) at (-2, 3) {};
	\end{pgfonlayer}
	\begin{pgfonlayer}{edgelayer}
		\draw [style=standardedge] (0) to (2);
		\draw [style=standardedge] (2) to (3);
		\draw [style=standardedge] (2) to (1);
		\draw [style=standardedge] (0) to (4);
		\draw [style=standardedge] (4) to (1);
		\draw [style=standardedge] (1) to (5);
		\draw [style=standardedge] (5) to (3);
		\draw [style=standardedge] (2) to (5);
	\end{pgfonlayer}
\end{tikzpicture} 
}
\quad
\subfloat[$X_8$
$(6,9,X_1)$\label{subfig-8}]{%
	\begin{tikzpicture}
	\begin{pgfonlayer}{nodelayer}
		\node [style=rn] (0) at (-4, 2) {};
		\node [style=rn] (1) at (-3.5, 3) {};
		\node [style=rn] (2) at (-3, 2) {};
		\node [style=rn] (3) at (-3.5, 1) {};
		\node [style=rn] (4) at (-4, -0) {};
		\node [style=rn] (5) at (-3, -0) {};
	\end{pgfonlayer}
	\begin{pgfonlayer}{edgelayer}
		\draw [style=standardedge] (2) to (5);
		\draw [style=standardedge] (0) to (2);
		\draw [style=standardedge] (4) to (5);
		\draw [style=standardedge] (0) to (4);
		\draw [style=standardedge] (5) to (3);
		\draw [style=standardedge] (0) to (1);
		\draw [style=standardedge] (1) to (2);
		\draw [style=standardedge] (0) to (3);
		\draw [style=standardedge] (3) to (2);
	\end{pgfonlayer}
\end{tikzpicture}
}

\subfloat[$X_9$
$(7,6,X_{14})$\label{subfig-9}]{%
	\begin{tikzpicture}
	\begin{pgfonlayer}{nodelayer}
		\node [style=rn] (0) at (-2, 2) {};
		\node [style=rn] (1) at (-3, 2) {};
		\node [style=rn] (2) at (-3, 1) {};
		\node [style=rn] (3) at (-2, 1) {};
		\node [style=rn] (4) at (-3, -0) {};
		\node [style=rn] (5) at (-3, -1) {};
		\node [style=rn] (6) at (-2, -1) {};
		\node [style=rn] (7) at (-3, 2) {};
	\end{pgfonlayer}
	\begin{pgfonlayer}{edgelayer}
		\draw [style=standardedge] (0) to (1);
		\draw [style=standardedge] (1) to (2);
		\draw [style=standardedge] (2) to (3);
		\draw [style=standardedge] (2) to (4);
		\draw [style=standardedge] (4) to (5);
		\draw [style=standardedge] (5) to (6);
	\end{pgfonlayer}
\end{tikzpicture}
}
\quad\quad
\subfloat[$X_{10}$
$(7,7,X_{13})$\label{subfig-10}]{%
	\begin{tikzpicture}
	\begin{pgfonlayer}{nodelayer}
		\node [style=rn] (0) at (-3, 2) {};
		\node [style=rn] (1) at (-3, 1) {};
		\node [style=rn] (2) at (-2, -0) {};
		\node [style=rn] (3) at (-3, -0) {};
		\node [style=rn] (4) at (-3, -1) {};
		\node [style=rn] (5) at (-2, -1) {};
		\node [style=rn] (6) at (-3, 2) {};
		\node [style=rn] (7) at (-2, 1) {};
	\end{pgfonlayer}
	\begin{pgfonlayer}{edgelayer}
		\draw [style=standardedge] (0) to (1);
		\draw [style=standardedge] (1) to (3);
		\draw [style=standardedge] (3) to (4);
		\draw [style=standardedge] (4) to (5);
		\draw [style=standardedge] (3) to (2);
		\draw [style=standardedge] (1) to (7);
		\draw [style=standardedge] (7) to (2);
	\end{pgfonlayer}
\end{tikzpicture} 
}
\quad\quad
\subfloat[$X_{11}$
$(7,8,X_{12})$\label{subfig-11}]{%
	\begin{tikzpicture}
	\begin{pgfonlayer}{nodelayer}
		\node [style=rn] (0) at (-3, 3) {};
		\node [style=rn] (1) at (-3, 2) {};
		\node [style=rn] (2) at (-3, 1) {};
		\node [style=rn] (3) at (-2, 1) {};
		\node [style=rn] (4) at (-3, -0) {};
		\node [style=rn] (5) at (-3, 2) {};
		\node [style=rn] (6) at (-2, 2) {};
		\node [style=rn] (7) at (-2, 3) {};
	\end{pgfonlayer}
	\begin{pgfonlayer}{edgelayer}
		\draw [style=standardedge] (0) to (1);
		\draw [style=standardedge] (1) to (2);
		\draw [style=standardedge] (2) to (3);
		\draw [style=standardedge] (2) to (4);
		\draw [style=standardedge] (0) to (7);
		\draw [style=standardedge] (7) to (6);
		\draw [style=standardedge] (6) to (3);
		\draw [style=standardedge] (1) to (6);
	\end{pgfonlayer}
\end{tikzpicture} 
}
\quad\quad
\subfloat[$X_{12}$
$(7,13,X_{11})$\label{subfig-12}]{%
	\begin{tikzpicture}
	\begin{pgfonlayer}{nodelayer}
		\node [style=rn] (0) at (-5, 2) {};
		\node [style=rn] (1) at (-4.25, 0.5) {};
		\node [style=rn] (2) at (-3.5, 2) {};
		\node [style=rn] (3) at (-5, -1) {};
		\node [style=rn] (4) at (-4.25, 1.5) {};
		\node [style=rn] (5) at (-3.5, -1) {};
		\node [style=rn] (6) at (-4.25, -0.5) {};
	\end{pgfonlayer}
	\begin{pgfonlayer}{edgelayer}
		\draw [style=standardedge] (4) to (1);
		\draw [style=standardedge] (0) to (2);
		\draw [style=standardedge] (1) to (3);
		\draw [style=standardedge] (6) to (1);
		\draw [style=standardedge] (4) to (2);
		\draw [style=standardedge] (0) to (3);
		\draw [style=standardedge] (3) to (6);
		\draw [style=standardedge] (6) to (5);
		\draw [style=standardedge] (1) to (5);
		\draw [style=standardedge] (2) to (5);
		\draw [style=standardedge] (5) to (3);
		\draw [style=standardedge] (2) to (1);
		\draw [style=standardedge] (0) to (4);
	\end{pgfonlayer}
\end{tikzpicture} 
}
\quad\quad
\subfloat[$X_{13}$
$(7,14,X_{10})$\label{subfig-13}]{%
	\begin{tikzpicture}
	\begin{pgfonlayer}{nodelayer}
		\node [style=rn] (0) at (-3, -0) {};
		\node [style=rn] (1) at (-2.25, -1.5) {};
		\node [style=rn] (2) at (-2.25, -0.5) {};
		\node [style=rn] (3) at (-1.5, -3) {};
		\node [style=rn] (4) at (-1.5, -0) {};
		\node [style=rn] (5) at (-2.25, -2.5) {};
		\node [style=rn] (6) at (-3, -3) {};
	\end{pgfonlayer}
	\begin{pgfonlayer}{edgelayer}
		\draw [style=standardedge] (6) to (0);
		\draw [style=standardedge] (4) to (1);
		\draw [style=standardedge] (0) to (2);
		\draw [style=standardedge] (1) to (3);
		\draw [style=standardedge] (6) to (1);
		\draw [style=standardedge] (4) to (2);
		\draw [style=standardedge] (3) to (6);
		\draw [style=standardedge] (6) to (5);
		\draw [style=standardedge] (1) to (5);
		\draw [style=standardedge] (2) to (1);
		\draw [style=standardedge] (0) to (4);
		\draw [style=standardedge] (4) to (3);
		\draw [style=standardedge] (5) to (3);
		\draw [style=standardedge] (6) to (2);
	\end{pgfonlayer}
\end{tikzpicture} 
}
\quad\quad
\subfloat[\mbox{}\quad$X_{14}$\quad\mbox{}
$(7,15,X_{9})$\label{subfig-14}]{%
	\begin{tikzpicture}
	\begin{pgfonlayer}{nodelayer}
		\node [style=rn] (0) at (-2.25, -0.5) {};
		\node [style=rn] (1) at (-3, -3) {};
		\node [style=rn] (2) at (-1.5, -0) {};
		\node [style=rn] (3) at (-3, 0) {};
		\node [style=rn] (4) at (-1.5, -3) {};
		\node [style=rn] (5) at (-2.25, -2.5) {};
		\node [style=rn] (6) at (-2.25, -1.5) {};
	\end{pgfonlayer}
	\begin{pgfonlayer}{edgelayer}
		\draw [style=standardedge] (6) to (0);
		\draw [style=standardedge] (4) to (1);
		\draw [style=standardedge] (0) to (2);
		\draw [style=standardedge] (1) to (3);
		\draw [style=standardedge] (6) to (1);
		\draw [style=standardedge] (4) to (2);
		\draw [style=standardedge] (0) to (3);
		\draw [style=standardedge] (3) to (6);
		\draw [style=standardedge] (6) to (5);
		\draw [style=standardedge] (4) to (5);
		\draw [style=standardedge] (1) to (5);
		\draw [style=standardedge] (2) to (5);
		\draw [style=standardedge] (5) to (3);
		\draw [style=standardedge] (6) to (4);
		\draw [style=standardedge] (3) to (2);
	\end{pgfonlayer}
\end{tikzpicture} 
}

\subfloat[$X_{15}$
$(8,9,X_{18})$\label{subfig-15}]{%
	\begin{tikzpicture}
	\begin{pgfonlayer}{nodelayer}
		\node [style=rn] (0) at (-3, 1) {};
		\node [style=rn] (1) at (-3, -1) {};
		\node [style=rn] (2) at (-4, 1) {};
		\node [style=rn] (3) at (-4, -1) {};
		\node [style=rn] (4) at (-3, -0) {};
		\node [style=rn] (5) at (-4, -2) {};
		\node [style=rn] (6) at (-3.5, -0.5) {};
		\node [style=rn] (7) at (-3, -2) {};
	\end{pgfonlayer}
	\begin{pgfonlayer}{edgelayer}
		\draw [style=standardedge] (0) to (2);
		\draw [style=standardedge] (2) to (3);
		\draw [style=standardedge] (0) to (4);
		\draw [style=standardedge] (4) to (1);
		\draw [style=standardedge] (5) to (3);
		\draw [style=standardedge] (1) to (3);
		\draw [style=standardedge] (4) to (6);
		\draw [style=standardedge] (6) to (3);
		\draw [style=standardedge] (1) to (7);
	\end{pgfonlayer}
\end{tikzpicture} 
}
\quad\quad
\subfloat[$X_{16}$
$(8,10,X_{17})$\label{subfig-16}]{%
	\begin{tikzpicture}
	\begin{pgfonlayer}{nodelayer}
		\node [style=rn] (0) at (-2, -0) {};
		\node [style=rn] (1) at (-2.5, -0.5) {};
		\node [style=rn] (2) at (-2, -1) {};
		\node [style=rn] (3) at (-3, -1) {};
		\node [style=rn] (4) at (-2, 1) {};
		\node [style=rn] (5) at (-3, -2) {};
		\node [style=rn] (6) at (-3, 1) {};
		\node [style=rn] (7) at (-2.5, 0.5) {};
	\end{pgfonlayer}
	\begin{pgfonlayer}{edgelayer}
		\draw [style=standardedge] (0) to (2);
		\draw [style=standardedge] (2) to (3);
		\draw [style=standardedge] (0) to (4);
		\draw [style=standardedge] (5) to (3);
		\draw [style=standardedge] (4) to (6);
		\draw [style=standardedge] (6) to (3);
		\draw [style=standardedge] (6) to (7);
		\draw [style=standardedge] (7) to (1);
		\draw [style=standardedge] (3) to (1);
		\draw [style=standardedge] (1) to (0);
	\end{pgfonlayer}
\end{tikzpicture} 
}
\quad\quad
\subfloat[\mbox{}\hspace{1cm}$X_{17}$\hspace{1cm}\mbox{}
$(8,18,X_{16})$\label{subfig-17}]{%
	\begin{tikzpicture}
	\begin{pgfonlayer}{nodelayer}
		\node [style=rn] (0) at (-4, 4) {};
		\node [style=rn] (1) at (-3.5, 2.5) {};
		\node [style=rn] (2) at (-4, 1) {};
		\node [style=rn] (3) at (-1, 4) {};
		\node [style=rn] (4) at (-2.5, 2.5) {};
		\node [style=rn] (5) at (-2.5, 4) {};
		\node [style=rn] (6) at (-1, 1) {};
		\node [style=rn] (7) at (-1.5, 2.5) {};
	\end{pgfonlayer}
	\begin{pgfonlayer}{edgelayer}
		\draw [style=standardedge] (4) to (1);
		\draw [style=standardedge] (0) to (2);
		\draw [style=standardedge] (6) to (1);
		\draw [style=standardedge] (4) to (2);
		\draw [style=standardedge] (2) to (1);
		\draw [style=standardedge] (0) to (4);
		\draw [style=standardedge] (4) to (3);
		\draw [style=standardedge] (5) to (3);
		\draw [style=standardedge] (6) to (2);
		\draw [style=standardedge] (5) to (0);
		\draw [style=standardedge] (0) to (1);
		\draw [style=standardedge] (0) to (7);
		\draw [style=standardedge] (2) to (7);
		\draw [style=standardedge] (5) to (7);
		\draw [style=standardedge] (3) to (6);
		\draw [style=standardedge] (4) to (6);
		\draw [style=standardedge] (3) to (7);
		\draw [style=standardedge] (4) to (7);
	\end{pgfonlayer}
\end{tikzpicture} 
}
\quad\quad
\subfloat[\mbox{}\hspace{1cm}$X_{18}$\hspace{1cm}\mbox{}
$(8,19,X_{15})$\label{subfig-18}]{%
	\begin{tikzpicture}
	\begin{pgfonlayer}{nodelayer}
		\node [style=rn] (0) at (-4.5, 2.5) {};
		\node [style=rn] (1) at (-3.5, 2.5) {};
		\node [style=rn] (2) at (-6, 1) {};
		\node [style=rn] (3) at (-3, 4) {};
		\node [style=rn] (4) at (-3, 1) {};
		\node [style=rn] (5) at (-4.5, 4) {};
		\node [style=rn] (6) at (-6, 4) {};
		\node [style=rn] (7) at (-5.5, 2.5) {};
	\end{pgfonlayer}
	\begin{pgfonlayer}{edgelayer}
		\draw [style=standardedge] (6) to (0);
		\draw [style=standardedge] (4) to (1);
		\draw [style=standardedge] (0) to (2);
		\draw [style=standardedge] (1) to (3);
		\draw [style=standardedge] (6) to (1);
		\draw [style=standardedge] (4) to (2);
		\draw [style=standardedge] (6) to (5);
		\draw [style=standardedge] (1) to (5);
		\draw [style=standardedge] (2) to (1);
		\draw [style=standardedge] (0) to (4);
		\draw [style=standardedge] (4) to (3);
		\draw [style=standardedge] (5) to (3);
		\draw [style=standardedge] (6) to (2);
		\draw [style=standardedge] (5) to (0);
		\draw [style=standardedge] (0) to (1);
		\draw [style=standardedge] (0) to (7);
		\draw [style=standardedge] (2) to (7);
		\draw [style=standardedge] (6) to (7);
		\draw [style=standardedge] (5) to (7);
	\end{pgfonlayer}
\end{tikzpicture} 
}

\caption{The 18 minimal asymmetric graphs. These are also the minimal involution-free graphs. For each graph the triple~$(n,m,\text{co-}{G})$, describes the number of vertices, edges and the name of the complement graph, respectively. The graphs are ordered first by number of vertices and second by number of edges.}

\label{fig:min_asym_graphs}

\end{figure}
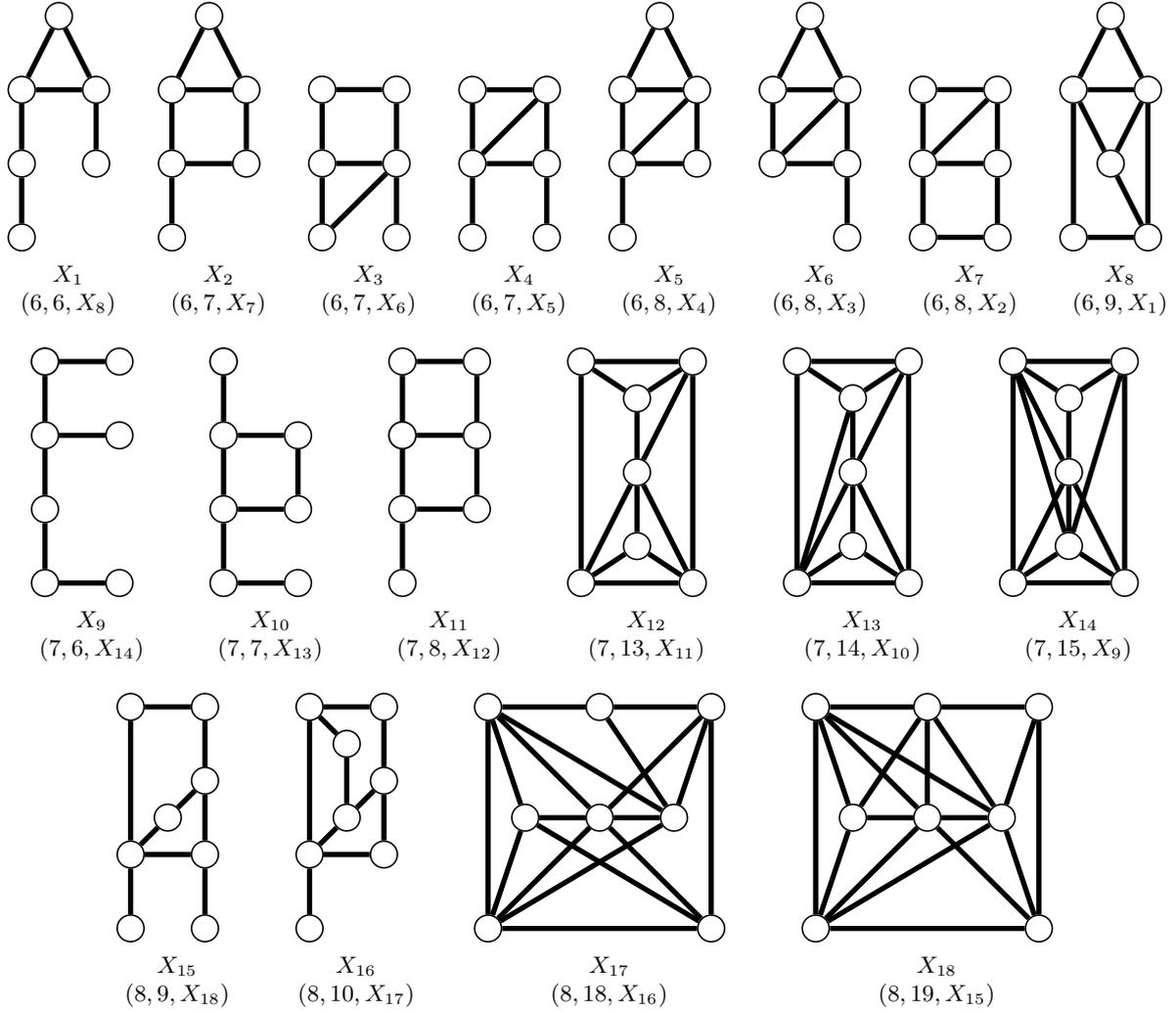

A classic result of Erd\H{o}s and R\'{e}nyi~\cite{ErdRen} says that as~$n$ tends to infinity most graphs on~$n$ vertices are asymmetric, and it is not difficult to see that every finite graph embeds into a finite asymmetric graph, so Theorem~\ref{thm:min_asym_graphs} may come as surprise.

In their papers, Ne\v{s}et\v{r}il and Sabidussi found a close connection between minimal asymmetric graphs and minimal involution-free graphs. A graph is \emph{involution-free} if it does not have an automorphism of order~2. An undirected graph~$G$ on at least 2 vertices is \emph{minimal involution-free} if~$G$ is involution-free and no proper induced subgraph of~$G$ on at least 2 vertices is involution-free.
Ne\v{s}et\v{r}il and Sabidussi conjectured that the set of finite minimal asymmetric undirected graphs and set of finite minimal involution-free undirected graphs are the same~\cite{DBLP:journals/gc/NesetrilS92}. We also confirm this conjecture, thereby determining all finite minimal involution-free undirected graphs.

\begin{theorem}\label{thm:min:inv:free}
There are exactly 18 finite minimal involution-free undirected graphs up to isomorphism. These are the 18 graphs depicted in Figure~\ref{fig:min_asym_graphs}. 
\end{theorem}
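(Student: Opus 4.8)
The plan is to compare the $18$ graphs of Figure~\ref{fig:min_asym_graphs}, which by Theorem~\ref{thm:min_asym_graphs} are precisely the finite minimal asymmetric graphs, with the finite minimal involution-free graphs, proving that each of the two classes is contained in the other. I will use two elementary facts throughout: every asymmetric graph is involution-free, since a trivial automorphism group contains no automorphism of order~$2$; and, by Cauchy's theorem, a finite graph is involution-free exactly when its automorphism group has odd order. Also, complementation preserves automorphism groups, so both classes are closed under complementation --- matching the nine complementary pairs visible in Figure~\ref{fig:min_asym_graphs} --- which lets me argue about the $18$ graphs nine pairs at a time.

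First I would show that each of the $18$ graphs $X$ is minimal involution-free. It is asymmetric, hence involution-free, so the only thing to verify is that every proper induced subgraph $H$ of $X$ on at least two vertices has an automorphism of order~$2$. Note this is strictly stronger than ``$H$ is not asymmetric'', which is already guaranteed by minimal asymmetry of $X$. Since every $X$ has at most $8$ vertices, $H$ has at most $7$ vertices, and a short case analysis on the orbit structure of an automorphism of odd prime order (compare the fact that the least order of a graph with a nontrivial automorphism group of odd order is $9$) shows that a graph on at most $7$ vertices with a nontrivial automorphism has one of order~$2$. As $H$ does have a nontrivial automorphism, $X$ being minimal asymmetric, it has an involution, so $X$ is minimal involution-free.

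Next, let $G$ be a finite minimal involution-free graph; I want $G$ to be one of the $18$ graphs. The key observation is that every proper induced subgraph of $G$ on at least two vertices has an involution, hence is not involution-free and a fortiori not asymmetric; so if I knew $G$ itself were asymmetric, then $G$ would be minimal asymmetric and Theorem~\ref{thm:min_asym_graphs} would finish the job. Thus the inclusion reduces to the claim that \emph{every finite minimal involution-free graph is asymmetric}. To prove this I would assume $\mathrm{Aut}(G)$ is nontrivial of odd order, pick $\sigma \in \mathrm{Aut}(G)$ of odd prime order $p \ge 3$ (its orbits have size $1$ or $p$), and look for a proper induced subgraph of $G$ on at least two vertices that is still involution-free, contradicting minimality. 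The direct candidates are deleting a single $\sigma$-orbit of size $p$, or deleting the set of $\sigma$-fixed vertices, after discarding the degenerate case that $G$ is a single $\sigma$-orbit --- in which case $G$ is a circulant graph on $p$ vertices and $x \mapsto -x$ is an automorphism of order~$2$, already a contradiction. Where these direct deletions do not suffice, the intended remedy is to re-run the classification behind Theorem~\ref{thm:min_asym_graphs}: its bound on the number of vertices of a minimal asymmetric graph, and its split into the $P_5$-containing case (the nine graphs of Ne\v{s}et\v{r}il and Sabidussi) and the $P_5$-free case, only ever invoke that a proper induced subgraph of the graph under consideration fails to be involution-free, never full asymmetry; so the argument applies verbatim with ``involution-free'' replacing ``asymmetric'' and again yields exactly these $18$ graphs, each of which, as recorded above, is asymmetric.

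I expect the genuine obstacle to be exactly the implication ``minimal involution-free $\Rightarrow$ asymmetric''. Vertex deletion is not monotone for involution-freeness and can create new involutions, so one cannot simply shrink $G$ greedily while preserving odd automorphism order; the way around this is the one sketched above, namely to isolate which structural inputs of the proof of Theorem~\ref{thm:min_asym_graphs} (the size bound, the $P_5$ dichotomy, the behaviour of removable vertices) are actually used, and to check that ``a proper induced subgraph is not involution-free'' suffices wherever ``a proper induced subgraph is not asymmetric'' was used. The only genuinely new computation is then the finite check from the first step --- that every proper induced subgraph on at least two vertices of each of the $18$ graphs has an involution.
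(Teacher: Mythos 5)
There is a genuine gap, and it is structural: you derive Theorem~\ref{thm:min:inv:free} from Theorem~\ref{thm:min_asym_graphs}, but in this paper the dependence runs the other way. Theorem~\ref{thm:min_asym_graphs} is not proved independently; it is obtained from Theorem~\ref{thm:min:inv:free} together with Lemma~\ref{lem:min:inv:free:impl:asym} and the (finite) observation that the 18 graphs are asymmetric. So taking ``the 18 graphs are precisely the minimal asymmetric graphs'' as an input makes your argument circular. Moreover, even viewed as a standalone plan, the decisive step --- that every finite minimal involution-free graph is asymmetric, equivalently is one of the 18 --- is not actually carried out. Your direct attack (delete a $\sigma$-orbit or the fixed-point set of an odd-order automorphism) is, as you concede, not sound, since vertex deletion can create new involutions; your fallback, ``re-run the classification behind Theorem~\ref{thm:min_asym_graphs} with involution-free in place of asymmetric,'' is not a remedy but is literally the content of Theorem~\ref{thm:min:inv:free}, and you supply none of it, relying instead on an unverifiable claim about which hypotheses that (unseen) proof uses. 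The paper's proof \emph{is} that classification, performed for involution-freeness from the start: a minimal involution-free graph outside the list of 18 lies in $\mathcal{C}$, is prime, and by Ne\v{s}et\v{r}il--Sabidussi and Fouquet is $(P_5,\text{co-}P_5,C_5)$-free; the dominating-clique theorem rules out bipartite and co-bipartite members of $\mathcal{C}$ (Lemma~\ref{lem:bipartite_implies_P5_in_C}), split graphs are excluded (Lemma~\ref{lem:not_split_in_C}), and the extension lemmas for the letter-$A$-graph (Lemmas~\ref{lem:the:extension:lemma}--\ref{lem:extensions:of:A:improved}) force every graph in $\mathcal{C}$ to be bipartite, co-bipartite or split (Lemma~\ref{lem:the_smoking_lemma}), so $\mathcal{C}=\emptyset$; only afterwards is the asymmetric statement deduced.

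One part of your proposal is fine and even a pleasant alternative to the paper: the verification that each of the 18 graphs is minimal involution-free. Checking these graphs directly is a finite task independent of any classification (the paper simply cites Ne\v{s}et\v{r}il--Sabidussi for it), and your route via ``a graph on at most $7$ vertices with a nontrivial automorphism has an involution'' (since a nontrivial automorphism group of odd order needs at least $9$ vertices) is legitimate, though that auxiliary fact would itself need a proof or reference. But this is the easy inclusion; the hard inclusion, that there are no further minimal involution-free graphs, is exactly what your proposal leaves unproved.
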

The theorem allows making more specialized statements if one is interested in particular graph classes. For example, one can observe that the only bipartite finite minimal involution-free graphs are~$X_9$,~$X_{10}$ and~$X_{11}$.
Thus every finite~$(X_9, X_{10},X_{11})$-free bipartite graph has an involution.
Similarly every finite~$(X_3, X_4)$-free split graph has an involution.

\paragraph{Notation.} All graphs in this paper are finite and simple (i.e., loopless and undirected). For a graph~$G$ we denote by~$V(G)$ and~$E(G)$ the set of vertices and edges, respectively. The \emph{open neighborhood}~$N_G(v)$ of a vertex~$v\in V(G)$ is the set of vertices adjacent to~$v$. The \emph{closed neighborhood}~$\overline{N}_G(v)$ is defined as~$N_G(v)\cup \{v\}$. For a subset of the vertices~$M\subseteq V(G)$ we denote by~$G[M]$ the subgraph of~$G$ \emph{induced} by~$M$. By~``$G$ \emph{contains} a graph~$H$'', we mean that~$G$ has an induced subgraph that is isomorphic to~$H$. 

\section{Involutions vs. automorphisms}

In this section we discuss the difference between minimal involution-free and minimal asymmetric graphs. Of course not every involution-free graph is asymmetric. 
This suggests that in principle there could be minimal involution-free graphs that are not minimal asymmetric. 
Indeed, if we consider automorphisms of order 3 instead of involutions (i.e., automorphism of order 2), it is not difficult to find minimal asymmetric graphs that are not minimal ``order-3-automorphism''-free (all graphs in Figure~\ref{fig:min_asym_graphs}) and vice versa (graphs on two vertices). 
However, when considering involutions, neither of these possibilities occurs. In fact up to isomorphism, minimal asymmetric undirected graphs and minimal involution-free undirected graphs are exactly the same. To show this, it suffices to determine all minimal involution-free graphs, observe that they all are asymmetric and use the following lemma.

\begin{lemma}\label{lem:min:inv:free:impl:asym}
If every minimal involution-free graph is asymmetric, then the minimal asymmetric graphs and the minimal involution-free graphs are exactly the same.
\end{lemma}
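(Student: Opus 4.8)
The plan is to prove the two inclusions separately, resting on two elementary observations. The first is that every asymmetric graph is involution-free, since an involution is by definition an automorphism of order~$2$ and hence nontrivial. The second is a routine ``descent to a minimal witness'' fact: every involution-free graph on at least two vertices contains, as an induced subgraph, a minimal involution-free graph. This follows by induction on the number of vertices---if~$H$ is involution-free on at least two vertices but is not itself minimal involution-free, then by definition it has a proper induced subgraph~$H'$ on at least two vertices that is again involution-free, and~$H'$ has strictly fewer vertices than~$H$, so we may recurse; the process terminates at a graph on at least two vertices because each step only ever passes to an involution-free induced subgraph on at least two vertices.

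For the inclusion ``minimal involution-free~$\subseteq$ minimal asymmetric'', let~$G$ be minimal involution-free. By the hypothesis of the lemma,~$G$ is asymmetric, so it remains only to verify that no proper induced subgraph of~$G$ on at least two vertices is asymmetric. But any such subgraph would, by the first observation, be involution-free, contradicting the minimality of~$G$ among involution-free graphs. Hence~$G$ is minimal asymmetric.

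For the converse inclusion ``minimal asymmetric~$\subseteq$ minimal involution-free'', let~$G$ be minimal asymmetric. Then~$G$ is asymmetric, hence involution-free by the first observation, so it remains to verify that no proper induced subgraph~$H$ of~$G$ on at least two vertices is involution-free. Suppose for contradiction that such an~$H$ exists. By the second observation,~$H$ contains an induced subgraph~$H'$ on at least two vertices that is minimal involution-free; since~$V(H')\subseteq V(H)\subsetneq V(G)$, the graph~$H'$ is in fact a \emph{proper} induced subgraph of~$G$ on at least two vertices. By the hypothesis of the lemma~$H'$ is asymmetric, contradicting the minimality of~$G$ among asymmetric graphs. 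Hence~$G$ is minimal involution-free, which completes the proof.

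The argument is entirely elementary, and the only point that needs a little care is the bookkeeping that keeps all the passed-to subgraphs proper and on at least two vertices in the second reduction---this is exactly what the descent fact is tailored to guarantee. Combined with the remainder of the paper, where one determines all minimal involution-free graphs and checks that each of them is asymmetric (so that the hypothesis of this lemma holds), the lemma shows that the two families coincide, and thus Theorem~\ref{thm:min:inv:free} and Theorem~\ref{thm:min_asym_graphs} are equivalent.
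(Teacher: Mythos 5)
Your proof is correct, and it follows essentially the same route as the paper: the first inclusion is verbatim the paper's argument, and your descent to a minimal involution-free subgraph~$H'$ is just a slightly more explicit version of the paper's ``choose~$H$ as small as possible'' step before invoking the hypothesis. No gaps.
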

\begin{proof}
Let~$G$ be a minimal involution-free graph. If~$G$ is asymmetric, then~$G$ is minimal asymmetric, since every proper subgraph of~$G$ on at least two vertices has an involution and is thus not asymmetric. Under the assumption of the lemma we thus conclude that the minimal involution-free graphs are minimal asymmetric.

Suppose now that not every minimal asymmetric graph is minimal involution-free. Let~$G$ be minimal asymmetric graph that is not minimal involution-free. 
Since~$G$ is asymmetric and thus involution-free, there is a proper subgraph~$H$ of~$G$ on at least two vertices that is involution-free but not asymmetric. Choosing~$H$ as small as possible we conclude that~$H$ is minimal involution-free but not asymmetric, contradicting the assumption of the lemma.
\end{proof}

Since we show that all minimal involution-free graphs are indeed asymmetric, we are only concerned with minimal involution-free graphs throughout the rest of the paper.
We remark, however, that 
all presented proofs are applicable almost verbatim when replacing ``involution'' by ``automorphism'' and ``involution-free'' by ``asymmetric''. 

\section{An extension lemma for subgraphs of prime graphs}

A \emph{homogeneous set} of a graph~$G$ is a subset of its vertices~$M\subseteq V(G)$ with~$2\leq |M|< |V|$ such that for all~$x,x'\in M$ we have~$N_G(x)\setminus M = N_G(x')\setminus M$. 
A graph is called~\emph{prime} if it does not have homogeneous sets.
Note that for a homogeneous set~$M$, every involution of~$G[M]$ extends to an involution of~$G$ that fixes all vertices outside of~$M$. In particular this means that no subset~$M$ of a minimal involution-free graph can be a homogeneous set, since~$G[M]$ contains an involution that could be extended to an involution of the entire graph. We conclude that minimal involution-free 
graphs are prime.  
In particular, a minimal involution-free 
undirected graph that has more than one vertex neither contains a \emph{universal vertex} (i.e., a vertex adjacent to all other vertices) nor an isolated vertex.

Two distinct vertices~$x$ and~$x'$ of a graph~$G$ are called \emph{true twins} if they have exactly the same open neighborhood, i.e.,~$N_G(x)=N_G(x')$. Note that in this case~$x$ and~$x'$ cannot be adjacent. Furthermore,~$x$ and~$x'$ are called \emph{false twins} if they have exactly the same closed neighborhood, i.e.,~$\overline{N}_G(x)=\overline{N}_G(x')$. Note that in this case~$x$ and~$x'$ must be adjacent. In either case it holds that~$N_G(x) \setminus\{x'\} = N_G(x') \setminus\{x\}$. Thus, if~$x$ and~$x'$ are true or false twins,~$\{x,x'\}$ is a homogeneous set.

We repeatedly use the following technique to extend a graph that has true or false twins.

\begin{lemma}\label{lem:the:extension:lemma}
Let~$G$ be a prime graph that contains a graph~$H$ on at least 3 vertices as an induced subgraph.
Let~$x,x' \in V(H)$ be true or false twins in~$H$. Then~$G$ contains an induced subgraph~$H'$ isomorphic to~$H$ via an isomorphism~$\varphi$ and a vertex~$v\in V(G)\setminus V(H')$ adjacent to exactly one vertex of~$\{\varphi(x),\varphi(x')\}$ such that~$\{\varphi(x),\varphi(x'),v\}$ is not a homogeneous set of~$G[V(H')\cup \{v\}]$. 
\end{lemma}

\begin{proof}
Let~$H$ be an induced subgraph of~$G$ and let~$x,x'$ be true or false twins in~$H$. By possibly complementing both~$G$ and~$H$ we can assume that~$x$ and~$x'$ are adjacent.
Let~$M$ be the set of vertices~$u$ in~$(V(G)\setminus V(H)) \cup \{x,x'\}$ with~$N_G(u)\cap V(H) \setminus \{x,x'\} = N_G(x) \cap V(H)\setminus \{x,x'\}$. Note that in particular~$\{x,x'\} \subseteq M$. Moreover~$M\neq V(G)$ since~$H$ has a least three vertices. 
Let~$M'$ be the vertices of~$M$ that are in the connected component of~$G[M]$ containing~$x$ and~$x'$. By construction~$M'$ is a homogeneous set of~$G[M'\cup V(H)]$.
Since~$G$ is prime,~$M'$ cannot be a homogeneous set in~$G$. Recalling that~$M\neq V(G)$ we observe that there exists a vertex~$v\in V(G)\setminus M'$ that is adjacent to some vertex~$y\in M'$ and non-adjacent to some vertex~$y' \in M'$. Since~$G[M']$ is connected, we can choose~$y$ and~$y'$ to be adjacent. Note that~$v\notin M$, since~$M'$ forms a connected component of~$G[M]$. The graph~$H'$ induced by~$(V(H)\setminus \{x,x'\}) \cup \{y,y'\}$ is isomorphic to~$H$. An isomorphism~$\varphi$ can be constructed by sending~$x$ to~$y \coloneqq \varphi(x)$ and~$x'$ to~$y' \coloneqq \varphi(x')$ as well as fixing all other vertices. The graph induced by the set~$(V(H') \cup \{v\}$ 
has the property that~$\{y,y',v\}$ is not a homogeneous set since~$y\in M$ but~$v\notin M$. 
\end{proof}

We remark that we cannot necessarily guarantee that the graph~$H'$ claimed to exist by the lemma is equal to the graph~$H$. However, in applications of the lemma we are usually only interested in the existence of certain subgraphs. In this case, we can just assume that~$H$ and~$H'$ are equal and that there exists a vertex~$v\in V(G) \setminus V(H)$ adjacent to exactly one of~$x$ and~$x'$ with~$N_{G}(v)\cap V(H) \setminus\{x,x'\} \neq N_H(x) \setminus \{x'\} = N_H(x') \setminus \{x\}$.

As an example of how to apply the lemma, we give a very concise proof of a well known result of Ho{\`{a}}ng and Reed, which we require later. For this recall that the \emph{house} is the complement of a~$P_5$, that the \emph{domino} is the graph obtained from a~$C_6$ by adding an edge between two vertices of maximum distance and that the \emph{letter-$A$-graph}, which is shown in Figure~\ref{fig:the:A:Graph}, is obtained from a domino by deleting an edge whose endpoints both have degree 2.

\begin{lemma}[\cite{DBLP:journals/jgt/HoangR89}]\label{lem:4cycle:house:domnino}
A prime graph that contains a~$4$-cycle contains a house or a domino or a letter-$A$-graph.
\end{lemma}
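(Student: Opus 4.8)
The plan is to start from a 4-cycle $C = \{a,b,c,d\}$ (with edges $ab, bc, cd, da$) inside the prime graph $G$ and argue that we can repeatedly apply Lemma~\ref{lem:the:extension:lemma} to either close up the structure into one of the three target graphs or keep growing a path-like configuration that cannot continue forever. The key observation is that in a 4-cycle the two opposite pairs $\{a,c\}$ and $\{b,d\}$ are each pairs of true twins (same open neighborhood within $C$, and non-adjacent). So Lemma~\ref{lem:the:extension:lemma} applies to $H = C$: there is a vertex $v \in V(G)\setminus V(C)$ adjacent to exactly one of $a,c$ (say adjacent to $a$, not $c$) with $v$ not making $\{a,c,v\}$ homogeneous, i.e.\ $N_G(v)\cap\{b,d\} \neq \emptyset$ — wait, more carefully, the non-homogeneity together with $v\not\sim c$ forces $v$ to be adjacent to $a$; then we do a case analysis on $N_G(v)\cap\{b,d\}$.

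First I would set up the case distinction on how $v$ attaches to $C$. The relevant induced subgraphs on $\{a,b,c,d,v\}$ are: $v$ adjacent to $a$ only (giving a "$P_5$-like" pendant, actually $C_4$ plus a pendant edge, which already contains an induced $P_5$ hence — no, we want house/domino/A, so this case must be pushed further), $v$ adjacent to $a$ and $b$ (this gives the house!), $v$ adjacent to $a$ and $d$ (also a house by symmetry), and $v$ adjacent to $a,b,d$ but not $c$ (this is $C_4$ plus a vertex adjacent to three consecutive vertices; check whether this contains a house — it contains $K_4$ minus an edge plus... one should just verify it contains a house as an induced subgraph, or handle it separately). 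So the only genuinely problematic case is when every such extension vertex $v$ is adjacent to exactly one vertex of the 4-cycle. In that case, $C$ together with the pendant edge $av$ forms an induced "$C_4$ with a tail," and I would now look at the true twins $\{b, d\}$ again, or better, observe that $\{v, c\}$ — hmm — one should identify a fresh true/false twin pair in this 5-vertex graph and re-apply the extension lemma to grow the tail or create a cycle.

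The cleanest route is probably: in the graph $C_4 +$ pendant (vertices $a,b,c,d,v$ with $v\sim a$), the vertices $b$ and $d$ are still true twins (both adjacent exactly to $a$ and $c$). Apply Lemma~\ref{lem:the:extension:lemma}: get a vertex $w$ adjacent to exactly one of $b,d$, say $b$, not making $\{b,d,w\}$ homogeneous. Now case on $N_G(w) \cap \{a,c,v\}$. If $w \sim a$ we get a house or a $K_4$-minus-edge configuration on $\{a,b,c,w\}$ extended; if $w\sim c$ likewise; if $w\sim v$ and $w\not\sim a$ then $v$—$w$—$b$—$a$—$v$ is a 4-cycle and $\{v,b,c,d\}$... one tracks which of house/domino/A appears. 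The heart of the argument — and the main obstacle — is organizing this branching so it terminates: one wants a monotone quantity (e.g.\ "shortest induced cycle through a designated edge" or the size of a maximal induced cycle-plus-tail) that the extension lemma is guaranteed to improve, so that after finitely many steps a house, domino, or letter-$A$-graph is forced. I expect the bookkeeping of the adjacency cases (roughly $2 \times 3 = 6$ subcases at each step, many symmetric or immediately yielding a target graph) to be the routine-but-delicate part, while the conceptual content is entirely the repeated twin-extraction plus extension-lemma move, which is exactly the "concise proof" the authors advertise.

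Alternatively, and perhaps more in the spirit of the promised brevity: note that a $C_4$ is precisely the domino/house/letter-$A$ with certain vertices deleted; take a maximal induced subgraph $H$ of $G$ that is an "initial segment" of a domino (i.e.\ is an induced $C_4$, or $C_4$-plus-tail, i.e.\ the letter-$A$-graph minus more vertices), find true or false twins in $H$, apply Lemma~\ref{lem:the:extension:lemma} to find $H'\cong H$ and a vertex $v$ attaching non-trivially, and check that every possible attachment either extends $H$ to the next larger initial segment (contradicting maximality unless we have already reached the letter-$A$-graph or domino) or directly exhibits a house. The main obstacle remains verifying that the finitely many attachment patterns all fall into one of these two buckets; I would do this by drawing the at-most-five-vertex pictures rather than by computation.
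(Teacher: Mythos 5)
Your overall strategy is the paper's: apply Lemma~\ref{lem:the:extension:lemma} to the true twins $\{a,c\}$ of the $4$-cycle, land in either a house or a $C_4$ with a pendant vertex, then apply the lemma again to the twins $\{b,d\}$ of that five-vertex graph and analyse how the new vertex attaches. But as written there is a genuine gap, and it sits exactly where you wave your hands. First, you misstate the non-homogeneity guarantee: for the twins $\{a,c\}$ it says $N_G(v)\cap\{b,d\}\neq\{b,d\}$ (not $\neq\emptyset$), i.e.\ $v$ is \emph{not} adjacent to both $b$ and $d$. This clause is not cosmetic: the attachment you list as ``$v$ adjacent to $a,b,d$ but not $c$'' contains no induced house (it is the wheel $W_4$ minus one spoke, with $7$ edges), and no domino or letter-$A$ fits in five vertices, so if that case were allowed your argument would simply be stuck; the lemma is what rules it out, and you need to say so rather than ``handle it separately.'' The same issue recurs in the second round: the attachment ``$w$ adjacent to both $a$ and $c$ but not $v$'' produces none of the three target graphs, and again it is precisely the non-homogeneity condition $N_G(w)\cap\{a,c,v\}\neq\{a,c\}$ that excludes it.

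Second, your expectation that the ``heart of the argument'' is a termination/monotonicity scheme for an open-ended iteration is mistaken, and it signals that you never actually closed the case analysis. After the second application of Lemma~\ref{lem:the:extension:lemma} the proof is over: with $w$ adjacent to $b$ but not $d$, the options for $N_G(w)\cap\{a,c,v\}$ are $\emptyset$ (letter-$A$-graph on $\{a,b,c,d,v,w\}$), exactly $\{v\}$ (domino), exactly one of $a,c$ (house on the cycle plus $w$, regardless of the edge to $v$), all of $\{a,c,v\}$ (house on $\{a,c,d,v,w\}$), and $\{a,c\}$ alone, which is forbidden as above. No induction, no monotone quantity, and no third extension step is needed. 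So the missing content is not bookkeeping of an unbounded process but the finite five-case check after the second extension together with the correct use of the non-homogeneity clause in both rounds; with those supplied, your sketch becomes the paper's proof.
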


\begin{proof}
Let~$G$ be a prime graph. Let~$x_1,x_2,x_3,x_4$ be vertices forming an induced~$4$-cycle~$C$ with~$x_1$ and~$x_3$ non-adjacent.
By Lemma~\ref{lem:the:extension:lemma} we can assume that there is a vertex~$v_1$ not in~$C$ adjacent to~$x_1$ but not adjacent to~$x_3$ with
$N_G(v_1)\cap V(C) \setminus \{ x_1,x_3\} = N_G(v_1)\cap \{ x_2,x_4\} \neq \{x_2,x_4\}$.
The vertices~$v_1,x_1,x_2,x_3,x_4$ now induce a house or a~$4$-cycle with an attached leaf. In the former case, we are done, so we assume the latter.
We apply Lemma~\ref{lem:the:extension:lemma} again and can therefore assume that there is a~$v_2 \notin (V(C)\cup\{v_1\})$ adjacent to~$x_2$ but not to~$x_4$ such that~$N_G(v_2) \cap \{v_1,x_1,x_3\}\neq \{x_1,x_3\}$.
We analyze the neighbors of~$v_2$ in~$\{x_1,x_3,v_1\}$.
If~$v_2$ is not adjacent to a vertex in~$\{x_1,x_3,v_1\}$, we get a letter-$A$-graph.
If~$v_2$ is adjacent to exactly~$v_1$, then~$\{x_1,x_2,x_3,x_4, v_1,v_2\}$ induces a domino. 
If~$v_2$ is adjacent to exactly one vertex in~$\{x_1, x_3\}$, then~$\{x_1,x_2,x_3,x_4, v_2\}$ induces a house (irrespective of the adjacency to~$v_1$). By the condition for~$N_G(v_2)$ it cannot be the case that~$v_2$ is a neighbor of~$x_1$ and~$x_3$ but not of~$v_1$.
It remains the case that~$v_2$ is a neighbor of all vertices in~$\{x_1,x_3,v_1\}$. In this case~$\{x_1,x_3,x_4, v_1,v_2\}$ induces a house.
\end{proof}

\section{Minimal involution-free graphs}

Ne\v{s}et\v{r}il and Sabidussi~\cite{DBLP:journals/gc/NesetrilS92} identified 18 minimal involution-free graphs. These are depicted in Figure~\ref{fig:min_asym_graphs} and are all asymmetric. We let~$\mathcal{C}$ 
be the class of minimal involution-free graphs that are not isomorphic to one of these 18 graphs. Our goal throughout the rest of the paper is to show that~$\mathcal{C}$ is empty.
While Figure~\ref{fig:min_asym_graphs} contains graphs that have~$P_5$ and~co-$P_5$ as induced subgraphs, graphs in~$\mathcal{C}$ are~$P_5$-free and~co-$P_5$-free~\cite{DBLP:journals/gc/NesetrilS92}.

In~\cite{DBLP:journals/dm/Fouquet93}, Fouquet shows that every prime~$(P_5,\text{co-}P_5)$-free graph is~$C_5$-free, or is isomorphic to~$C_5$. 
Recalling that all minimal involution-free graphs are prime and noting that~$C_5$ is not involution-free, we conclude that all graphs in~$\mathcal{C}$ are~$C_5$-free, in addition to being~$(P_5,\text{co-}P_5)$-free.

\begin{fact}
Graphs in~$\mathcal{C}$ are~$(P_5,\text{co-}P_5,C_5)$-free. 
\end{fact}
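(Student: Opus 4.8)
The plan is to combine three facts that are already on the table: (i) the classification, due to Ne\v{s}et\v{r}il and Sabidussi~\cite{DBLP:journals/gc/NesetrilS92}, of the minimal involution-free graphs that contain an induced $P_5$; (ii) Fouquet's structural result~\cite{DBLP:journals/dm/Fouquet93} that a prime $(P_5,\text{co-}P_5)$-free graph is either $C_5$-free or isomorphic to $C_5$; and (iii) the observation from Section~3 that every minimal involution-free graph is prime.

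First I would deal with $P_5$ and its complement. By~\cite{DBLP:journals/gc/NesetrilS92}, every minimal involution-free graph that contains an induced $P_5$ appears among the $18$ graphs of Figure~\ref{fig:min_asym_graphs}; complementing, the same is true for co-$P_5$, since the complement of a minimal involution-free graph is again minimal involution-free and the list in Figure~\ref{fig:min_asym_graphs} is closed under complementation. As $\mathcal{C}$ is by definition the class of minimal involution-free graphs \emph{not} isomorphic to any of these $18$, no graph in $\mathcal{C}$ contains $P_5$ or co-$P_5$; that is, every graph in $\mathcal{C}$ is $(P_5,\text{co-}P_5)$-free.

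Next I would rule out $C_5$. Let $G\in\mathcal{C}$. Then $G$ is prime by~(iii) and $(P_5,\text{co-}P_5)$-free by the previous step, so Fouquet's result~(ii) gives that $G$ is either $C_5$-free or isomorphic to $C_5$. But $C_5$ is not involution-free --- the reflection of the $5$-cycle that fixes one vertex is an automorphism of order $2$ --- so $C_5\notin\mathcal{C}$, and hence $G$ must be $C_5$-free. Combined with the preceding paragraph, this shows that every graph in $\mathcal{C}$ is $(P_5,\text{co-}P_5,C_5)$-free.

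There is no genuinely hard step here: essentially all of the mathematical content is imported from~\cite{DBLP:journals/gc/NesetrilS92} and~\cite{DBLP:journals/dm/Fouquet93}. The only points needing a moment's attention are the bookkeeping that ties the definition of $\mathcal{C}$ to the Ne\v{s}et\v{r}il--Sabidussi list (including its closure under complementation, which is what lets us handle co-$P_5$ from the $P_5$ case), together with the elementary remark that $C_5$ carries an involution so that it is itself excluded from $\mathcal{C}$.
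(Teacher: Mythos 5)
Your proof is correct and follows essentially the same route as the paper: the $(P_5,\text{co-}P_5)$-freeness is imported from Ne\v{s}et\v{r}il--Sabidussi (with the complementation bookkeeping you spell out), and $C_5$-freeness then follows from Fouquet's theorem on prime $(P_5,\text{co-}P_5)$-free graphs together with primality of minimal involution-free graphs and the fact that $C_5$ has an involution.
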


Recall that a subset~$M$ of the vertices of a graph~$G$ is \emph{dominating} if every vertex in~$V(G)\setminus M$ has a neighbor in~$M$. 
We use the following well known fact that was independently proven by Bacs{\'o} and Tuza~\cite{MR1113210} as well as Cozzens and Kelleher~\cite{DBLP:journals/dm/CozzensK90}.

\begin{theorem}[\cite{MR1113210,DBLP:journals/dm/CozzensK90}]\label{thm:dominating_clique}
Every finite connected graph that is~$(P_5,C_5)$-free has a dominating clique. 
\end{theorem}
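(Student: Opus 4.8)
The plan is to work with an inclusion-minimal connected dominating set and show it must induce a clique. Since $G$ is connected it has at least one connected dominating set (namely $V(G)$ itself), so let $D$ be one that is minimal under inclusion. If $G[D]$ is a clique we are done, because $D$ dominates $G$. So I would assume for contradiction that $G[D]$ is not complete; being connected, it then contains two non-adjacent vertices, and a shortest path between them in $G[D]$ is an induced path on at least three vertices. Let $P=(q_0,q_1,\ldots,q_r)$, with $r\ge 2$, be a \emph{longest} induced path in $G[D]$.

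The next step is to attach a ``private'' vertex at each end of $P$. First I would note that $q_0$ is not a cut vertex of $G[D]$: otherwise $G[D]-q_0$ would have a component $C$ disjoint from the connected set $\{q_1,\ldots,q_r\}$, and a neighbour $q'$ of $q_0$ lying in $C$ (such a neighbour exists since $G[D]$ is connected) would extend $P$ to the strictly longer induced path $(q',q_0,q_1,\ldots,q_r)$, contradicting maximality. Hence $D\setminus\{q_0\}$ still induces a connected graph; as $D$ is a \emph{minimal} connected dominating set, $D\setminus\{q_0\}$ cannot be dominating, and since $q_0$ itself has the neighbour $q_1$ in $D\setminus\{q_0\}$ this forces a vertex $p_0\notin D$ with $N_G(p_0)\cap D=\{q_0\}$. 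Symmetrically there is $p_r\notin D$ with $N_G(p_r)\cap D=\{q_r\}$, and $p_0\ne p_r$ because $q_0\ne q_r$.

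Finally I would assemble the forbidden configuration from $p_0,q_0,q_1,\ldots,q_r,p_r$. These vertices form a path, and it is induced except possibly for the edge $p_0p_r$: indeed $p_0$ has no neighbour among $q_1,\ldots,q_r$, $p_r$ has no neighbour among $q_0,\ldots,q_{r-1}$, and $q_0q_1\cdots q_r$ is an induced path. If $p_0\not\sim p_r$ this is an induced path on $r+3\ge 5$ vertices, hence it contains an induced $P_5$; if $p_0\sim p_r$ it is an induced cycle on $r+3\ge 5$ vertices, which equals $C_5$ when $r=2$ and contains an induced $P_5$ (any five consecutive vertices) when $r\ge 3$. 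In every case we contradict the hypothesis that $G$ is $(P_5,C_5)$-free, so $G[D]$ must be a clique.

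I expect the delicate point to be the middle step: guaranteeing that each endpoint of a longest induced path really does have a private neighbour. The subtlety is that deleting $q_0$ from $D$ need not a priori preserve connectivity; the trick that makes the whole argument work is to exploit maximality of the induced path $P$ to rule out $q_0$ being a cut vertex of $G[D]$, after which minimality of $D$ hands over the private neighbour for free. As a sanity check on the boundary cases, if $|D|\le 2$ then $D$ is already a clique and if $G$ has a dominating vertex the statement is trivial, so the argument above is only needed when $G[D]$ fails to be complete, which is precisely the regime $r\ge 2$ used throughout.
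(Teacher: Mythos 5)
The paper itself gives no proof of Theorem~\ref{thm:dominating_clique}; it simply imports the result from Bacs\'o--Tuza and Cozzens--Kelleher, so there is no in-paper argument to match yours against. Your proof is correct as written: the longest-induced-path choice legitimately forces $q_0$ and $q_r$ not to be cut vertices of $G[D]$ (an extension $(q',q_0,\ldots,q_r)$ into another component of $G[D]-q_0$ would be a longer induced path), minimality of the connected dominating set then produces private vertices $p_0,p_r\notin D$ with $N_G(p_0)\cap D=\{q_0\}$ and $N_G(p_r)\cap D=\{q_r\}$ (and $q_0\sim q_1$ rules out $q_0$ itself being the undominated vertex), and the set $\{p_0,q_0,\ldots,q_r,p_r\}$ induces a path or cycle on $r+3\ge 5$ vertices, hence an induced $P_5$ or a $C_5$, a contradiction. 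In fact your argument establishes the stronger structural statement that \emph{every} inclusion-minimal connected dominating set of a connected $(P_5,C_5)$-free graph induces a clique; this connected-domination route is essentially the one later exploited by Camby and Schaudt in their characterizations of $P_k$-free graphs, whereas the cited original proofs proceed differently (Bacs\'o and Tuza prove the finer dichotomy that every connected $P_5$-free graph has a dominating clique or a dominating induced $P_3$, via a minimal-counterexample analysis). Your approach is more elementary, gives slightly more than existence, and is fully sufficient for the way the theorem is used in this paper.
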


Hence we conclude that every possible graph in~$\mathcal{C}$ has a dominating clique. With this observation we can rule out that there exist bipartite or co-bipartite minimal involution-free graphs in~$\mathcal{C}$ as follows.

\begin{lemma}\label{lem:bipartite_implies_P5_in_C}
There is no bipartite and no co-bipartite graph in~$\mathcal{C}$. 
\end{lemma}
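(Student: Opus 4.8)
The plan is to handle the bipartite case and then invoke complementation for the co-bipartite case, since a graph $G$ is in $\mathcal{C}$ if and only if its complement is (minimality, involution-freeness and primeness are all self-complementary, and the list of 18 graphs is closed under complementation as indicated in Figure~\ref{fig:min_asym_graphs}). So suppose for contradiction that $G \in \mathcal{C}$ is bipartite with parts $A$ and $B$. Since $G$ is prime it is connected and has at least three vertices, and by Theorem~\ref{thm:dominating_clique} (applicable because $G$ is $(P_5,C_5)$-free) it has a dominating clique; in a bipartite graph a clique has at most two vertices, and since $G$ is prime it has no universal vertex, so the dominating clique must be a single edge $\{a,b\}$ with $a\in A$, $b\in B$. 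Thus every vertex of $B\setminus\{b\}$ is adjacent to $a$ and every vertex of $A\setminus\{a\}$ is adjacent to $b$.

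Next I would exploit primeness to force the graph to be large enough to contain $P_5$. First, $B$ cannot be just $\{b\}$: otherwise every vertex of $A\setminus\{a\}$ would have closed neighborhood exactly the edge to $b$... actually more carefully, if $|B|=1$ then all vertices of $A\setminus\{a\}$ are pairwise true twins (all adjacent only to $b$), and two of them would form a homogeneous set unless $|A\setminus\{a\}|\le 1$, leaving $G=P_3$ or smaller — but $P_3$ has an involution. Symmetrically $|A|\ge 2$. Now pick $a'\in A\setminus\{a\}$ and $b'\in B\setminus\{b\}$: then $a,b,a',b'$ together with the edges $ab$, $ab'$, $a'b$ induce either a $C_4$ (if $a'b'\in E$) or a $P_4$ $a'$–$b$–$a$–$b'$ (if $a'b'\notin E$). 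If there is no edge between $A\setminus\{a\}$ and $B\setminus\{b\}$, then all of $A\setminus\{a\}$ are true twins of each other and all of $B\setminus\{b\}$ are true twins of each other, forcing $|A\setminus\{a\}|=|B\setminus\{b\}|=1$ and $G=P_4$, which has an involution — contradiction. So there is an edge $a'b'$ with $a'\ne a$, $b'\ne b$, giving an induced $C_4$ on $\{a,b,a',b'\}$.

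Now apply Lemma~\ref{lem:4cycle:house:domino}: $G$ is prime and contains a $C_4$, so it contains a house, a domino, or a letter-$A$-graph. The house contains $P_5$... no wait, the house is co-$P_5$; but $G$ is also co-$P_5$-free, so $G$ cannot contain a house. The house is moreover not bipartite (it has a triangle), which already rules it out directly. The domino is bipartite and $(P_5,\mathrm{co}\text{-}P_5,C_5)$-free, and so is the letter-$A$-graph, so I need one more step: I would apply Lemma~\ref{lem:the:extension:lemma} to the twin vertices inside a domino or letter-$A$-graph to extend it, and check that every extension either produces a $P_5$, a co-$P_5$, or a $C_5$ — any of which contradicts the Fact that graphs in $\mathcal{C}$ are $(P_5,\mathrm{co}\text{-}P_5,C_5)$-free. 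The main obstacle is precisely this last case analysis: one must identify suitable true/false twins in the domino and the letter-$A$-graph, run Lemma~\ref{lem:the:extension:lemma}, and verify in each of the (few) resulting adjacency cases that a forbidden $P_5$, co-$P_5$ or $C_5$ appears; this is the step where the bipartiteness is genuinely used, since in a bipartite graph the attached vertex $v$ has restricted possible neighborhoods (it lies in one of the two parts, so it is non-adjacent to everything on its own side), which should cut the casework down to something short.
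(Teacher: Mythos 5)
Your opening moves (reducing the co-bipartite case by complementation, and using Theorem~\ref{thm:dominating_clique} to get a dominating edge $\{v_1,v_2\}$ joined to the opposite sides) coincide with the paper's proof, but the endgame you sketch has a genuine gap. First, a technical point: Lemma~\ref{lem:the:extension:lemma} requires the subgraph $H$ to possess true or false twins, and neither the domino nor the letter-$A$-graph has any (in the labelling of Figure~\ref{fig:the:A:Graph} all six open neighborhoods are pairwise distinct, and likewise all six closed neighborhoods), so ``apply the extension lemma to the twin vertices inside a domino or letter-$A$-graph'' is not an available move. Also, your claim that the domino is $P_5$-free is wrong: $b_1,a_1,a_2,a_3,b_3$ together with the extra edge $a_3b_3$ induce a $P_5$, which is why the paper discards the domino immediately; so that case is trivial, not in need of casework.

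The deeper problem is that the case analysis you defer as ``the main obstacle'' cannot be completed at all: it is simply not true that every one-vertex extension of the letter-$A$-graph inside a prime bipartite $(P_5,\text{co-}P_5,C_5)$-free graph creates a forbidden subgraph. The paper's Lemma~\ref{lem:extensions:of:A} shows that adding a vertex whose neighborhood is an open neighborhood $N_H(u)$ (a true twin of $u$) is perfectly consistent with all these constraints, and such extensions preserve bipartiteness, so the restriction you hope to exploit does not exclude them. Concretely, arbitrarily large half-graphs (chain graphs) are prime, bipartite and $2K_2$-free, hence $(P_5,\text{co-}P_5,C_5)$-free, and they contain the letter-$A$-graph, which is itself the half-graph on $3+3$ vertices; so primeness plus forbidden subgraphs alone can never yield the desired contradiction in the bipartite case, and any correct proof must invoke involution-freeness and minimality of $G$ directly, which your argument stops doing after the dominating-edge step. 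This is exactly where the paper goes a different way: it applies Theorem~\ref{thm:dominating_clique} a second time, to the component of $G[V\setminus\{v_1\}]$ containing $v_2$, to produce a vertex $u_2$ whose only neighbor is $v_1$ (and, symmetrically, $u_1$ whose only neighbor is $v_2$), then deletes $\{u_1,u_2,v_1,v_2\}$, takes an involution of the connected remainder (which exists by minimality of $G$), and extends it to an involution of $G$, either fixing the four deleted vertices or swapping $v_1\leftrightarrow v_2$ and $u_1\leftrightarrow u_2$ --- contradicting involution-freeness. Note also that in the paper's architecture the letter-$A$-graph analysis (Lemmas~\ref{lem:extensions:of:A}--\ref{lem:min_asym_A_bipartite}) only reduces the problem to the bipartite, co-bipartite and split cases and then relies on the present lemma, so it cannot be expected to resolve the bipartite case on its own.
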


\begin{proof}
Since~$\mathcal{C}$ is closed under complements, it suffices to show that there is no bipartite graph in~$\mathcal{C}$. Thus, let~$G$ be a minimal involution-free bipartite graph from~$\mathcal{C}$ on vertex set~$V$.
Then~$G$ is prime and hence connected. Thus, the bipartition~$(A,B)$ is canonical (i.e., isomorphism invariant) up to interchanging~$A$ and~$B$. 
By Theorem~\ref{thm:dominating_clique},~$G$ has a dominating clique~$C$.
The clique cannot have size one, since the graph would have a universal vertex. Thus, the dominating clique~$C$ consists of exactly two vertices~$v_1\in A$ and~$v_2\in B$, say. Since~$G$ is connected each of the two vertices in~$C$ is joined to all vertices of the other bipartition class. Moreover, each bipartition class has at least 2 vertices.

Consider first the graph~$G[V\setminus \{v_1\}]$, obtained from~$G$ by removing~$v_1$. Within this subgraph let~$M$ be the connected component containing~$v_2$. Note that~$M$ contains at least~$v_2$ and all vertices of~$A$ except~$v_1$. 
We conclude with Theorem~\ref{thm:dominating_clique} that~$G[M]$ must have a dominating clique. 
This implies that there is a vertex~$x_1\in A\setminus\{v_1\}$ adjacent to every vertex of~$M\cap B$.
If it were the case that~$M\cap B = B$, then~$x_1$ would be adjacent to all vertices of~$B$. However, in this case~$x_1$ and~$v_1$ would be true twins, which cannot be since twins from a homogeneous set, but~$G$ is prime.

Thus, there is a vertex~$u_2\in B\setminus M$. 
Since~$M\cap A = A\setminus\{v_1\}$,~$u_2$ has exactly one neighbor, namely~$v_1$. Furthermore,~$u_2$ is the only vertex in~$B\setminus M$ since all such vertices are twins.

By symmetry we can repeat the entire argument finding vertices~$u_1\in A$ and~$x_2\in B$ such that the neighborhood of~$u_1$ is exactly~$\{v_2\}$ and~$x_2$ is adjacent to all vertices in~$A$ except~$v_1$.

We conclude by considering the graph~$H \coloneqq G[V\setminus \{u_1,u_2,v_1,v_2\}]$. With~$x_1$ and~$x_2$, the graph~$H$ has at least two vertices and is connected since~$x_1$ and~$x_2$ are each connected to all vertices of the other bipartition class. Thus~$H$ has an involution~$\psi$. This involution either fixes the bipartition classes of~$H$ (as sets) or swaps them. If~$\psi$ fixes the bipartition classes, we can extend~$\psi$ to an involution of~$G$ by fixing all vertices in~$\{u_1,u_2,v_1,v_2\}$. If~$\psi$ interchanges the bipartition classes of~$H$, we can extend~$\psi$ to an involution of~$G$ by swapping~$v_1$ and~$v_2$ as well as swapping~$u_1$ and~$u_2$.
\end{proof}

Next, we rule out the possibility that graphs in~$\mathcal{C}$ are split. Recall that a \emph{split graph} is a graph whose vertex set can be partitioned into two sets~$A$ and~$B$ such that~$A$ induces a clique in~$G$ and~$B$ induces an edgeless graph in~$G$, see Figure~\ref{fig:splt:proof}. 

\begin{lemma}\label{lem:not_split_in_C}
There is no split graph in~$\mathcal{C}$.
\end{lemma}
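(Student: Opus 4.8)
The plan is to argue by contradiction: suppose $G \in \mathcal{C}$ is a split graph, with vertex partition $V(G) = A \dunion B$ where $A$ induces a clique and $B$ induces an independent set. Since $\mathcal{C}$ is closed under complements and the complement of a split graph is split (swap the roles of $A$ and $B$), I can if necessary choose the split partition so that it is ``canonical enough'' to reason about; but the first real issue is that a split partition need not be unique. So I would first normalize: choose $A$ to be a \emph{maximum} clique, which makes $B$ minimal, and observe that then no vertex of $B$ is adjacent to all of $A$ (else it could be added to the clique), and — because $G$ is prime and has no universal vertex — $A$ is not a single vertex and $B$ is nonempty. Recall also from earlier in the paper that $G$ is $(P_5, \mathrm{co}\text{-}P_5, C_5)$-free, prime, hence connected, and (by Theorem~\ref{thm:dominating_clique}) has a dominating clique; since $A$ is a maximum clique it is a natural candidate, and in fact in a connected split graph the clique side always dominates.

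Next I would extract structure from primeness. Two vertices of $B$ with the same neighborhood in $A$ are true twins, hence form a homogeneous set; so the map $b \mapsto N_G(b) \cap A$ is injective on $B$. Dually (using that $A$ is a clique), two vertices $a, a' \in A$ with $N_G(a) \cap B = N_G(a') \cap B$ are false twins, so this map is injective on $A$ as well. This already bounds things: $|B| \le 2^{|A|} - 2$ (excluding $\emptyset$ and $A$) and symmetrically. The heart of the argument, though, should be a \emph{forbidden-subgraph squeeze}: a split graph that is $P_5$-free and $\mathrm{co}\text{-}P_5$-free is extremely restricted. A $P_5$ in a split graph would have to alternate cleverly between the clique and independent sides, and a $\mathrm{co}\text{-}P_5$ (which contains an induced $2K_2$ plus structure) similarly. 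I would enumerate: the absence of $2K_2$ as an induced subgraph (forced since $\mathrm{co}\text{-}P_5 \supseteq 2K_2$... actually $2K_2 = \mathrm{co}\text{-}C_4$, so I should instead directly use that $\mathrm{co}\text{-}P_5$-freeness forbids the complement pattern) pins down how two independent-set vertices can have disjoint clique-neighborhoods, and $P_5$-freeness pins down chains $b - a - a' - b' - \ldots$. The combination should force the neighborhood sets $\{N_G(b)\cap A : b \in B\}$ to form a laminar/nested family or a near-trivial structure.

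Once the possible neighborhood patterns are classified, I expect only a handful of small configurations survive, and for each I would exhibit an explicit involution of $G$ — swapping two twins-up-to-a-vertex, or swapping $a \leftrightarrow a'$, $b \leftrightarrow b'$ for a suitable matched pair — mirroring the endgame in the proof of Lemma~\ref{lem:bipartite_implies_P5_in_C}, where one deletes a small dominating-type set $\{u_1,u_2,v_1,v_2\}$, invokes minimality to get an involution on the rest, and extends it. Concretely: find a vertex $v_1 \in A$ and $v_2 \in B$ playing the role of the dominating clique's vertices, peel off the ``pendant-like'' vertices ($u_2 \in B$ seeing only $v_1$, etc.), and show $H := G - \{u_1,u_2,v_1,v_2\}$ is nonempty, connected, and has an involution that extends. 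The main obstacle I anticipate is the case analysis on neighborhood patterns in $A$ and $B$: unlike the bipartite case, the clique side contributes false-twin obstructions rather than true-twin ones, and one has to handle both the ``$A$ is large'' and ``$B$ is large'' regimes — but complement-closure of $\mathcal{C}$ and of the split-graph class should let me assume $|A| \le |B|$ (or similar) and cut the work roughly in half. The figure referenced (Figure~\ref{fig:splt:proof}) presumably records the resulting canonical picture, which is what I would aim to derive.
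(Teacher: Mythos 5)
Your plan has a fatal flaw at its core: the ``forbidden-subgraph squeeze'' you want to build the proof on is vacuous for split graphs. Split graphs are exactly the $(2K_2,C_4,C_5)$-free graphs, and since $P_5$ contains an induced $2K_2$ while co-$P_5$ contains an induced $C_4$, \emph{every} split graph is automatically $(P_5,\text{co-}P_5,C_5)$-free. So the hypotheses you intend to ``squeeze'' with give no restriction at all beyond splitness, and the class you would be classifying is simply the class of prime split graphs --- which is infinite and contains arbitrarily large asymmetric (hence involution-free) members, so the neighborhood patterns do not collapse to a laminar family or ``a handful of small configurations,'' and no endgame of exhibiting explicit involutions on the surviving cases can succeed. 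The ingredient your proposal never uses, and which is the actual engine of the paper's argument, is minimality itself in the form of forbidden induced subgraphs: a graph in $\mathcal{C}$ cannot contain any of the 18 listed graphs as an induced subgraph (each of them is involution-free, so it could not be a proper induced subgraph, and $G$ is not isomorphic to one of them); in particular graphs in $\mathcal{C}$ are $(X_4,X_5)$-free, and $X_3,X_4$ are themselves split, which already shows that ``prime $+$ involution-free $+$ split'' alone cannot be contradicted.

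The paper's proof is much shorter and runs as follows: since prime graphs on at least $3$ vertices are not $P_4$-free, $G$ contains an induced $P_4$ $b_1a_1a_2b_2$ with $a_1,a_2\in A$, $b_1,b_2\in B$; involution-freeness forces a vertex $a_3\in A$ adjacent to exactly one of $b_1,b_2$ (or the complementary configuration, handled by taking co-$G$), say to $b_2$; in $G[\{a_1,a_2,a_3,b_1,b_2\}]$ the vertices $a_2,a_3$ are false twins, so Lemma~\ref{lem:the:extension:lemma} supplies a vertex $v$ adjacent to $a_3$ but not $a_2$, which must lie in $B$; the only two possible neighborhoods of $v$ then make the six vertices induce $X_5$ or $X_4$, contradicting $(X_4,X_5)$-freeness of $\mathcal{C}$. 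Your opening normalizations (maximum clique, injectivity of $b\mapsto N_G(b)\cap A$ and its dual via primeness) are correct but do not substitute for this use of the extension lemma together with the forbidden graphs from Figure~\ref{fig:min_asym_graphs}; likewise, the hoped-for imitation of the endgame of Lemma~\ref{lem:bipartite_implies_P5_in_C} (delete four vertices, extend an involution of the rest) is not supported by any structural derivation in the split setting. As it stands, the proposal is a sketch with its central step both unexecuted and unworkable.
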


\begin{proof}
Let~$(A,B)$ be a split partition of~$G$ with~$A$ a clique and~$B$ an independent set. Since every~$P_4$-free graph on more than one vertex is disconnected or its complement is disconnected~\cite{MR619603},~$P_4$-free graphs on at least 3 vertices are not prime. We thus conclude that~$G$ has~4 vertices~$b_1,a_1,a_2,b_2$ that form an induced path~$P_4$ in that order. This implies~$a_1,a_2\in A$ and~$b_1,b_2\in B$.

Since~$G$ is involution-free, there must be a vertex~$a_3\in A$ adjacent to exactly one vertex in~$\{b_1, b_2\}$ or there must be a vertex~$b_3\in B$ adjacent to exactly one vertex in~$\{a_1, a_2\}$. By possibly considering the complement~$\text{co-}G$ and swapping~$A$ and~$B$, we can assume that the former case happens. By symmetry, we can maintain that the degree of~$b_2$ is at least as large as that of~$b_1$. We can thus assume that~$a_3$ is adjacent to~$b_2$ but not to~$b_1$, see Figure~\ref{fig:splt:proof}. 

\begin{figure}[tbh]
\centering
\begin{tikzpicture}
	\begin{pgfonlayer}{nodelayer}
		\node [style=rn] (0) at (-1,2) {$b_1$};
		\node [style=rn] (1) at (-3, 2) {$a_1$};
		\node [style=rn] (2) at (-3, 1) {$a_2$};
		\node [style=rn] (3) at (-1,1) {$b_2$};
		\node [style=rn] (4) at (-3, -0) {$a_3$};
	
		\node at (-3,3) {A};
		\node at (-1,3) {B};
	\end{pgfonlayer}
	\begin{pgfonlayer}{edgelayer}
		\draw [style=standardedge] (0) to (1);
		\draw [style=standardedge] (1) to (2);
		\draw [style=standardedge] (2) to (3);
		\draw [style=standardedge] (2) to (4);
		\draw [style=standardedge] (4) to (3);
        \draw[style=standardedge]  plot[smooth, tension=.7] coordinates {(1) (-3.5,1) (4)};
    \end{pgfonlayer}
\draw (-3,2);
\draw[dashed]  (3) ellipse (0.8 and 2.5);
\draw[dashed]  (2) ellipse (0.8 and 2.5);
\end{tikzpicture}
\caption{The figure details the proof of Lemma~\ref{lem:not_split_in_C}.}\label{fig:splt:proof}
\end{figure}
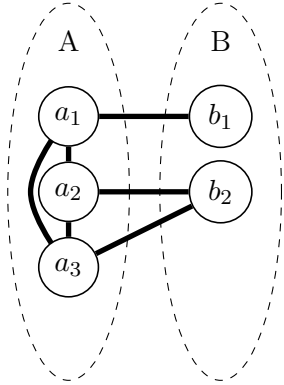

Note that in the graph induced by~$\{a_1,a_2,a_3,b_1,b_2\}$, the vertices~$a_2$ and~$a_3$ are false twins. By Lemma~\ref{lem:the:extension:lemma} we can assume that there is a vertex~$v$ adjacent to~$a_3$ and non-adjacent to~$a_2$. Recall that the application of Lemma~\ref{lem:the:extension:lemma} only provides an extension of a graph~$H'$ that is isomorphic to~$H \coloneqq G[\{a_1,a_2,a_3,b_1,b_2\}]$. Hence, in principle the vertices of~$H'$ could be distributed differently across the split partition of~$G$. However, there is only one split partition of~$H$ and we can assume~$H' = H$.

Thus, let~$v$ be adjacent to~$a_3$ and non-adjacent to~$a_2$. Since~$v$ is not adjacent to~$a_2$, it must be in~$B$. There are thus two options for~$N\coloneqq N_G(v)\cap \{a_1,a_2,a_3,b_1,b_2\}$. Either~$N = \{a_1,a_3\}$, in which case the graph induced by~$\{a_1,a_2,a_3,v,b_1,b_2\}$ is isomorphic to~$X_5$, or~$N = \{a_3\}$, in which case the graph induced by these vertices is isomorphic to~$X_4$. Since~$G$ is~$\mathcal{C}$ and thus~$(X_4,X_5)$-free the lemma follows.
\end{proof}

It is well known that a graph is a split graph if and only if it is a~$\{C_4,C_5,2K_2\}$-free graph~\cite{MR0505860}, see also~\cite{MR2063679}. Since all graphs in~$\mathcal{C}$ are~$C_5$-free, the lemma implies that all graphs in~$\mathcal{C}$ must contain a~$C_4$ or a~$2K_2$. Noting that a~$2K_2$ is the complement of a~$C_4$, it thus suffices to rule out graphs in~$\mathcal{C}$ that contain a~$C_4$, in order to show that~$\mathcal{C}$ is empty.

Recall that Lemma~\ref{lem:4cycle:house:domnino} says that prime graphs containing a~$C_4$ contain a house a domino or a letter-$A$-graph. Noting that the house is the complement of~$P_5$ and that the domino contains a~$P_5$, we conclude using Lemma~\ref{lem:4cycle:house:domnino} that graphs in~$\mathcal{C}$ or their complements must contain a letter-$A$-graph. We now investigate possible 1-vertex extensions of the letter-$A$-graph.
As the following lemma shows, either the new vertex is homogeneously connected or a true or false twin copying an existing vertex. 

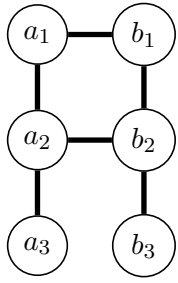
\begin{figure}[tbh]
\centering
\begin{tikzpicture}[scale = 0.70]
	\begin{pgfonlayer}{nodelayer}
		\node [style=rn] (a1) at (-3,2) {$a_1$};
		\node [style=rn] (b1) at (-1,2) {$b_1$};
		\node [style=rn] (a2) at (-3,0) {$a_2$};
		\node [style=rn] (b2) at (-1,0) {$b_2$};
		\node [style=rn] (a3) at (-3,-2) {$a_3$};
		\node [style=rn] (b3) at (-1,-2) {$b_3$};
		
	\end{pgfonlayer}
	\begin{pgfonlayer}{edgelayer}
		\draw [style=standardedge] (a1) to (b1);
		\draw [style=standardedge] (a2) to (b2);
		\draw [style=standardedge] (a1) to (a2);
		\draw [style=standardedge] (a2) to (a3);
		\draw [style=standardedge] (b1) to (b2);
		\draw [style=standardedge] (b2) to (b3);
	\end{pgfonlayer}
\end{tikzpicture}
\caption{The naming of the vertices in the letter-$A$-graph used throughout the proofs.}
\label{fig:the:A:Graph}
\end{figure}

\begin{lemma}\label{lem:extensions:of:A}
Let~$G$ be a graph in~$\mathcal{C}$. Let~$H$ be an induced subgraph of~$G$ isomorphic to the letter-$A$-graph. If~$x$ is a vertex of~$G$, then~$N_G(x)\cap V(H)$ is either empty, all of~$V(H)$, equal to~$N_H(v)$ or equal to~$\overline{N}_H(u) \coloneqq N_H(u)\cup \{u\}$ for some~$u\in V(H)$.
\end{lemma}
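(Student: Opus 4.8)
The plan is a finite case analysis on $N \coloneqq N_G(x)\cap V(H)$: I want to show that whenever $N$ is not one of the fourteen sets named in the statement—namely $\emptyset$, $V(H)$, and $N_H(u)$ and $\overline N_H(u)$ for $u\in V(H)$—the graph $G$ contains a forbidden induced subgraph, that is, an induced $P_5$, $\text{co-}P_5$ (a house), $C_5$, or a copy of one of $X_1,\dots,X_{18}$. Here I use that $G\in\mathcal C$ is $(P_5,\text{co-}P_5,C_5)$-free and, being minimal involution-free and not isomorphic to any $X_i$, contains none of $X_1,\dots,X_{18}$. If $x\in V(H)$ then $N=N_H(x)$ trivially, so I take $x\notin V(H)$.

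I would cut the case count down with the involution $\sigma$ of $H$ exchanging $a_i\leftrightarrow b_i$, and organize everything around the $4$-cycle $C$ induced by $\{a_1,a_2,b_1,b_2\}$, on which $a_3$ is a pendant of $a_2$, $b_3$ a pendant of $b_2$, and $a_2b_2$ an edge. Branching first on $N\cap V(C)$, there are up to $\sigma$ the possibilities $\emptyset$, $\{a_1\}$, $\{a_2\}$, $\{a_1,b_1\}$, $\{a_2,b_2\}$, $\{a_1,a_2\}$, $\{a_1,b_2\}$, the two ``three consecutive'' sets, and $V(C)$; inside each I would branch on $N\cap\{a_3,b_3\}$. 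A few one-line observations prune the tree: e.g.\ if $x\sim a_3$ but $x\not\sim a_2$, then one of $\{x,a_3,a_2,a_1,b_1\}$, $\{x,a_3,a_2,b_2,b_1\}$, $\{x,a_3,a_2,b_2,b_3\}$ induces a $P_5$ unless $x$ has an extra neighbour on $C$, and chasing those few sub-cases forces $N$ down to the one allowed set $N_H(a_2)=\{a_1,a_3,b_2\}$ (and symmetrically after applying $\sigma$).

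For almost every remaining bad value of $N$ the seven-vertex graph $G[V(H)\cup\{x\}]$ already contains a forbidden induced subgraph, which I would simply exhibit in each case; representative certificates are: $N=\{a_1,b_1\}$ gives a house on $\{x,a_1,a_2,b_1,b_2\}$; $N=\{a_1,b_3\}$ gives an induced $5$-cycle on $\{x,a_1,a_2,b_2,b_3\}$; $N=\{a_1,a_3\}$ gives an induced $P_5$ on $\{x,a_1,a_3,b_1,b_2\}$ (in the order $a_3,x,a_1,b_1,b_2$); and $N=\{a_1,a_2,a_3,b_1\}$ gives an induced $X_8$ on $\{x,a_1,a_2,a_3,b_1,b_2\}$. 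I expect the bulk of the proof to be exactly such a table of small certificates, verified against the explicit adjacencies of $H$ and of the $X_i$.

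The main obstacle is the handful of bad cases for which $G[V(H)\cup\{x\}]$ is itself $(P_5,\text{co-}P_5,C_5)$-free and isomorphic to no $X_i$, so that no certificate can live inside $V(H)\cup\{x\}$. The prototype is $N=V(C)=\{a_1,a_2,b_1,b_2\}$, i.e.\ $x$ adjacent to all of $C$ and to neither pendant: here $C\cup\{x\}$ induces a wheel $W_4$ in which opposite rim vertices such as $a_1,b_2$ are true twins, yet the pendants $a_3,b_3$ keep this pair (and $\{a_2,b_1\}$) from being a module, so nothing in $V(H)\cup\{x\}$ is forbidden. To settle such a case I would argue by contradiction and use that $G$ is prime—via Lemma~\ref{lem:the:extension:lemma} applied to a suitable induced subgraph, and/or the instances of the present lemma already established for the other values of $N$—to produce a further vertex $v\in V(G)$ with a tightly constrained neighbourhood on $V(H)\cup\{x\}$, from which a short sub-case analysis yields a forbidden configuration. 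Getting this residual argument right, and not mishandling any of the roughly two dozen routine cases, is where the work concentrates.
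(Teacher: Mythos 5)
Your overall strategy coincides with the paper's: a finite case analysis on $N \coloneqq N_G(x)\cap V(H)$ in which every inadmissible neighborhood is excluded by exhibiting an induced $P_5$, house, $C_5$, or one of the $X_i$ inside $V(H)\cup\{x\}$, and the representative certificates you list (house for $\{a_1,b_1\}$, $C_5$ for $\{a_1,b_3\}$, $P_5$ for $\{a_1,a_3\}$, $X_8$ for $\{a_1,a_2,a_3,b_1\}$) are all correct. The paper's proof is exactly such a table, organized by $|N|$ from $0$ to $6$, and the only forbidden configurations it ever needs are $P_5$, $C_5$, the house, $X_4$, $X_5$ and $X_8$.

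However, the place where you locate ``the main obstacle'' is based on a factual error, and the argument you defer to for it is both unnecessary and not carried out. For your prototype case $N=\{a_1,a_2,b_1,b_2\}$ you claim that nothing forbidden lives inside $V(H)\cup\{x\}$; this is false. The set $\{x,a_2,a_3,b_1,b_2,b_3\}$ (note that it omits $a_1$) induces exactly $X_4$: the triangle $x,a_2,b_2$ together with $b_1$ adjacent to $x$ and $b_2$ forms a $K_4$ minus the edge $a_2b_1$, and $a_3$, $b_3$ are pendants at $a_2$, $b_2$ respectively. Your check appears to have tested only whether the whole seven-vertex graph is isomorphic to some $X_i$, rather than whether it \emph{contains} a six-vertex $X_i$ after discarding a vertex of $H$; the same oversight is why the paper's Case~4 and Case~5 certificates ($X_4$, $X_5$, $X_8$) look invisible from your vantage point. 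Consequently there are no ``residual'' cases at all: every bad $N$ is killed by a five- or six-vertex certificate inside $V(H)\cup\{x\}$, and no appeal to primality or to Lemma~\ref{lem:the:extension:lemma} is needed for this lemma (that machinery enters only in the subsequent strengthening, Lemma~\ref{lem:extensions:of:A:improved}, to eliminate the closed neighborhoods and $V(H)$, which the present statement still allows). As written, your proposal leaves precisely the cases it flags as hardest to an unexecuted primality argument, and misclassifies which cases those are, so the proof is incomplete; the fix is simply to finish the certificate table, allowing certificates that drop vertices of $H$.
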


\begin{proof}
We differentiate cases according to the number of neighbors that~$v$ has in~$H$. 
For each of the 
options we show that if~$v$ has a certain neighborhood towards~$H$, it is either a closed or an open neighborhood of some vertex in~$H$ or it yields a graph that cannot be contained in a graph from~$\mathcal{C}$. We assume that the vertices of the graph~$H$ are named as indicated in Figure~\ref{fig:the:A:Graph}. 

\emph{Case 0:~$|N_G(v)\cap V(H)| = 0$}. Then~$N_G(v)\cap V(H) = \{\}$.

\emph{Case 1:~$|N_G(v)\cap V(H)| = 1$}. By symmetry we can assume that~$v$ is adjacent to~$a_1$,~$a_2$, or~$a_3$. If~$v$ is adjacent to~$a_1$, then~$\{v,a_1,a_2,b_2,b_3\}$ (and~$\{v,a_1,b_1,b_2,b_3\}$) induces a~$P_5$. If~$v$ is adjacent to~$a_2$, then~$v$ and~$a_3$ have the same open neighborhood in~$H$. If~$v$ is adjacent to~$a_3$, the set~$\{v,a_3,a_2,b_2,b_3\}$ (and~$\{v,a_3,a_2,a_1,b_1\}$) induces a~$P_5$.

\emph{Case 2:~$|N_G(v)\cap V(H)| = 2$}. 
If~$v$ has~$a_1$ as a neighbor, then to avoid a~$P_5$, vertex~$v$ must have a neighbor among~$\{a_2,b_2,b_3\}$ and among~$\{b_1,b_2,b_3\}$. If~$v$ is adjacent to~$a_1$ and~$b_2$, then~$v$ and~$b_1$ have the same open neighborhood in~$H$.
If~$v$ is adjacent to~$a_1$ and~$b_3$, then~$\{v,a_1,b_1,b_2,b_3\}$ induces a~$C_5$. 

If~$v$ has~$a_3$ as a neighbor, then to avoid a~$P_5$, vertex~$v$ must have a neighbor among~$\{a_2,b_2,b_3\}$ and among~$\{a_2,a_1,b_1\}$. However, if~$v$ is adjacent to~$a_3$ and~$a_2$, then the closed neighborhoods of~$v$ and~$a_3$ are the same when restricted to~$V(H)$. 

By symmetry, the only remaining option is that the neighborhood of~$v$ in~$H$ is~$\{a_2,b_2\}$, in which case~$\{v,a_2,a_1,b_1,b_2\}$ induces a house.

\emph{Case 3:~$|N_G(v)\cap V(H)| = 3$}.
Suppose that the vertex~$v$ is adjacent to exactly one vertex of the~$4$-cycle~$a_1,b_1,b_2,a_2$. Up to symmetry we then have the two options~$\{a_1,a_3,b_3\}$ or~$\{a_2,a_3,b_3\}$ for the neighborhood of~$v$. In the former case~$\{v,a_3,a_2,b_2,b_3\}$ induces a~$C_5$ and in the latter case~$\{a_3,v,b_3,b_2,b_1\}$ induces a~$P_5$.

Suppose~$v$ is adjacent to exactly two vertices of the~$4$-cycle~$a_1,b_1,b_2,a_2$ and suppose that the two neighbors of~$v$ are adjacent. Then~$\{v,a_1,a_2,b_1,b_2\}$ always induces a house. Now suppose that the two neighbors of~$v$ are not adjacent. Up to symmetry we then have that the neighborhood of~$v$ is either~$\{a_1,b_2,b_3\}$ or~$\{a_1,a_3,b_2\}$. In the former case~$\{v,a_1,b_1,b_2,b_3\}$ induces a house in the latter case~$v$ has the same neighborhood as~$a_2$.

It remains the case that~$v$ is adjacent to exactly three vertices of the~$4$-cycle~$a_1,b_1,b_2,a_2$. Up to symmetry we get as neighborhood of~$v$ the options~$\{a_1,a_2,b_1\}$ and~$\{a_1,a_2,b_2\}$. The former case is the closed neighborhood of~$a_1$, in the latter case~$\{v,a_1,a_2,a_3,b_2,b_3\}$ induces the graph~$X_4$, see Figure~\ref{fig:min_asym_graphs}.

\emph{Case 4:~$|N_G(v)\cap V(H)| = 4$}. As in Case~$3$, if~$v$ is adjacent to exactly two adjacent vertices on the~$4$-cycle~$a_1,b_1,b_2,a_2$, then~$\{v,a_1,b_1,a_2,b_2\}$ induces a house. 
Suppose~$v$ is not adjacent to~$a_1$. By the previous argument, neither~$a_2$ nor~$b_1$ can be a non-neighbor. If the other non-neighbor is~$a_3$, then the closed neighborhood of~$v$ and~$b_2$ are the same when restricted~$H$. If the other non-neighbor is~$b_2$ or~$b_3$, then~$\{v,a_1,a_2,a_3,b_1\}$ induces a house. Thus~$a_1$ is a neighbor of~$v$. By symmetry~$b_1$ is also a neighbor of~$v$. 

Up to symmetry the only possible options that remain for the non-neighborhood of~$v$ in~$V(H)$ are~$\{a_2,a_3\}$,~$\{a_2,b_3\}$, and~$\{a_3,b_3\}$. In the first case~$\{v,a_1,a_2,b_1,b_2,b_3\}$ induces the graph~$X_8$. In the second case~$\{v,a_1,a_2,a_3,b_1\}$ induces a house and in the third case~$\{v,a_2,a_3,b_1,b_2,b_3\}$ induces the graph~$X_4$, see Figure~\ref{fig:min_asym_graphs}.

\emph{Case 5:~$|N_G(v)\cap V(H)| = 5$}. By symmetry we can assume that~$v$ has a non-neighbor in~$\{a_1,a_2,a_3\}$. If~$v$ is not adjacent to~$a_1$, then~$\{v,a_1,a_2,a_3,b_1,b_2\}$ induces the graph~$X_8$, see Figure~\ref{fig:min_asym_graphs}. If~$v$ is not adjacent to~$a_2$, then~$\{v,a_1,a_2,b_1,b_2,b_3\}$ induces the graph~$X_8$. If~$v$ is not adjacent to~$a_3$, then~$\{v,a_1,a_2,a_3,b_2,b_3\}$ induces the graph~$X_5$.

 \emph{Case 6:~$|N_G(v)\cap V(H)| = 6$}. Then~$N_G(v)\cap V(H) = V(H)$.
\end{proof}

By combining Lemmas~\ref{lem:the:extension:lemma} and~\ref{lem:extensions:of:A} it is possible to rule out that the new vertex is a false twin and rule out that the new vertex is adjacent to all vertices. This strengthens Lemma~\ref{lem:extensions:of:A} as follows.

\begin{lemma}\label{lem:extensions:of:A:improved}
Let~$G$ be a graph in~$\mathcal{C}$. Let~$H$ be an induced a subgraph of~$G$ isomorphic to the letter-$A$-graph. If~$v$ is a vertex of~$G$, then~$N_G(v)\cap V(H)$ is either empty or equal to~$N_H(u)$ for some~$u\in V(H)$.
\end{lemma}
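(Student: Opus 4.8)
The plan is to start from Lemma~\ref{lem:extensions:of:A}, which already tells us that for any vertex $v\in V(G)$ the trace $N_G(v)\cap V(H)$ is one of four things: empty, all of $V(H)$, an open neighborhood $N_H(u)$, or a closed neighborhood $\overline{N}_H(u)$. We must eliminate the middle two possibilities (all of $V(H)$, and closed neighborhoods). The tool for this is Lemma~\ref{lem:the:extension:lemma} applied to twin pairs: whenever a hypothetical configuration forces a pair of true or false twins into some induced subgraph $H^\ast$ of $G$, the extension lemma produces a further vertex breaking that homogeneity, which in combination with the classification of Case~\ref{lem:extensions:of:A} will land us on one of the forbidden $X_i$, a $P_5$, a co-$P_5$, or a $C_5$.

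First I would rule out the case $N_G(v)\cap V(H)=V(H)$, i.e.\ $v$ adjacent to all of $H$. Then $v$ together with the four-cycle vertices, say $a_1$ and $b_2$ (the diagonally opposite degree-$2$ corners), gives that $v$ is adjacent to both $a_1$ and $b_2$; more to the point, in $G[V(H)\cup\{v\}]$ one checks that $v$ forms a true twin with some vertex of $H$, or more simply that $v$ and an appropriate vertex of $H$ become twins — the cleanest route is: $v$ being universal to $H$ means that in $G[\{v\}\cup V(H)]$ the vertex $v$ is adjacent to everything, so $v$ is a universal vertex of that induced subgraph; but $G$ is prime and has at least one more vertex, and applying Lemma~\ref{lem:the:extension:lemma} to the twin pair one obtains by the usual complement trick (a universal vertex is, in the complement, an isolated vertex, which together with any other vertex of degree zero... ) yields a contradiction. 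Concretely, I expect to argue: if $v$ is universal to $H$, then $v$ is a true twin of nothing yet, but pick any vertex $a$ of $H$ of minimum degree in $H$; then $\{v,a\}$ need not be twins in $H\cup\{v\}$, so instead I would directly invoke that a universal-to-$H$ vertex forces, via Lemma~\ref{lem:the:extension:lemma} applied to an already-present twin pair inside $H$ (the letter-$A$-graph has the twin pair $a_2,a_3$... no: $a_3,b_3$ have degree one), a new vertex whose trace on the enlarged graph is not on the allowed list of Lemma~\ref{lem:extensions:of:A}. This is the step I expect to require a little care; the honest version is probably: observe $v$ universal to $H$ means $H\cup\{v\}$ contains a larger letter-$A$-graph (or a house, domino, $X_i$) directly, and re-read Case~5 and Case~6 of the previous lemma with roles permuted.

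Next I would rule out the closed-neighborhood cases $N_G(v)\cap V(H)=\overline{N}_H(u)$. In each such case $v$ and $u$ are false twins in $G[V(H)\cup\{v\}]$ (same closed neighborhood restricted there, and they are adjacent). Apply Lemma~\ref{lem:the:extension:lemma} to this false-twin pair inside the prime graph $G$: it yields (after the remark following that lemma lets us take the subgraph to be the actual one) a vertex $w\in V(G)$ adjacent to exactly one of $v,u$ with $N_G(w)\cap(V(H)\cup\{v\})\setminus\{u,v\}\neq N_H(u)\setminus\{\text{the other}\}$. Now $w$ restricted to $V(H)$ alone must, by Lemma~\ref{lem:extensions:of:A}, be empty / all of $V(H)$ / $N_H(u')$ / $\overline{N}_H(u')$; go through the finitely many pairs $(u,u')$ together with the adjacency of $w$ to $v$. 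In every sub-case the set $V(H)\cup\{v,w\}$ either contains a $P_5$, a co-$P_5$ ($=$house plus extension), a $C_5$, or one of $X_4,X_5,X_8$ (the same small list that appeared in Lemma~\ref{lem:extensions:of:A}), contradicting $G\in\mathcal C$. The main obstacle is purely the bookkeeping: there are up to six choices of $u$ (one per vertex of $H$), and for each a handful of admissible traces of $w$; but the symmetry of the letter-$A$-graph ($a_i\leftrightarrow b_i$) roughly halves this, and most sub-cases collapse immediately because $w$ adjacent to a degree-one vertex of $H$ already makes a $P_5$. Having excluded both $V(H)$ and all closed neighborhoods, the only survivors are $\emptyset$ and the open neighborhoods $N_H(u)$, which is exactly the statement.
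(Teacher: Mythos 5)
Your treatment of the closed-neighborhood cases is essentially the paper's: there $v$ and $u$ are false twins in $G[V(H)\cup\{v\}]$, Lemma~\ref{lem:the:extension:lemma} supplies a vertex $w$ adjacent to exactly one of them whose trace on $V(H)$ is again constrained by Lemma~\ref{lem:extensions:of:A}, and a finite case check lands on a $P_5$, house, or an $X_i$. You have not actually carried out that check, and you miss the paper's trick that cuts it down drastically (the trace of $w$ on the copy of $H$ obtained by swapping $u$ for $v$ must also be admissible, so \emph{both} $N_G(w)\cap V(H)$ and $N_G(w)\cap V(H)\cup\{u\}$ must appear in the list of fourteen admissible sets, leaving a single surviving sub-case per choice of $u$), but the route itself is sound.

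The genuine gap is the case $N_G(v)\cap V(H)=V(H)$. Both of your suggested ways out fail. First, there is no twin pair to feed into Lemma~\ref{lem:the:extension:lemma}: the letter-$A$-graph has no two vertices with equal open or closed neighborhoods, and adding a vertex universal to it creates none, so ``applying the extension lemma to an already-present twin pair inside $H$'' has nothing to apply to; the complement trick does not produce one either, since a vertex universal to an induced subgraph is not a universal vertex of $G$ and primality of $G$ says nothing directly about it. Second, your fallback claim that $H\cup\{v\}$ with $v$ universal ``contains a larger letter-$A$-graph (or a house, domino, $X_i$) directly'' is false --- it contains none of the forbidden configurations, which is precisely why the all-of-$V(H)$ trace survived the case analysis of Lemma~\ref{lem:extensions:of:A} in the first place, so no local rereading of Cases~5 and~6 can kill it. The paper needs a different, non-local argument here: starting from $V(H)$ it greedily builds a set $M$ by repeatedly adding vertices that are attached non-homogeneously to the current set, so that $M$ is by construction a homogeneous set of $G$ containing $V(H)$; it then shows by induction that every $y\in M$ lies in a copy of the letter-$A$-graph reachable from $H$ inside $M$ by one-vertex swaps, and since a trace of size $5$ on such a copy is inadmissible by Lemma~\ref{lem:extensions:of:A}, the vertex $v$ (adjacent to all six vertices of the first copy) must be adjacent to all of $M$; hence $v\notin M$, so $M$ is a proper homogeneous set, contradicting primality of $G$. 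Without this (or some substitute) argument, the universal case remains open and the lemma is not proved.
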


\begin{proof}
Let us first list the possible neighborhoods~$N_G(v)\cap V(H)$ that a vertex~$v\in V(H)$ can have towards the letter-$A$-graph~$H$ according to Lemma~\ref{lem:extensions:of:A}. These are the six open and six closed neighborhoods
\begin{center}
$
\begin{array}{l@{}c@{}l l@{}c@{}l}
N_H(a_1) &=& \{a_2,b_1\}&
\overline{N}_H(a_1)&=& \{a_1,a_2,b_1\}\\ 
N_H(b_1) &=& \{b_2,a_1\}&
\overline{N}_H(b_1) &=& \{b_1,b_2,a_1\}\\
N_H(a_2) &=& \{a_1,a_3,b_2\}&
\overline{N}_H(a_2) &=& \{a_1,a_2,a_3,b_2\}\\
N_H(b_2) &=& \{b_1,b_3,a_2\}&
\overline{N}_H(b_2) &=& \{b_1,b_2,b_3,a_2\}\\
N_H(a_3) &=& \{a_2\}& 
\overline{N}_H(a_3) &=& \{a_2,a_3\}\\
N_H(b_3) &=& \{b_2\}&
\overline{N}_H(b_3) &=& \{b_2,b_3\},
\end{array}
$
\end{center}
as well as the two homogeneous sets~$\{\}$ and~$V(H) = \{a_1,a_2,a_3,b_1,b_2,b_3\}$.
For the moment, let us call these 14 sets \emph{(allegedly) admissible}.  

\emph{Case 1:~$N_G(v)\cap V(H) = V(H)$}. 
Suppose that~$H$ is an induced copy of the~letter-$A$-graph and~$v$ is adjacent to all vertices of~$H$. Let~$M$ be a set obtained by starting with~$V(H)$ and repeatedly adding one vertex
that has both a neighbor and a non-neighbor in the set created so far, until no such vertex remains. Then~$M$ is a homogeneous set containing~$V(H)$ by construction. However, we can argue that~$v$ is not in~$M$ as follows.

Let~$M_1 \subseteq M_2 \subseteq \ldots \subseteq M_\ell = M\subseteq V(G)$ be a sequence of vertex sets such that~$M_1 = V(H)$,~$M_{i+1}\setminus M_{i} = \{y_{i+1}\}$ and such that~$y_{i+1}$ has a neighbor and a non-neighbor in~$M_i$. 
We claim that~$M$ has the property that for all~$y\in M$ there is a sequence~$( N_1^y ,N_2^y,\ldots,N_t^y)$  
of subsets of~$M$ each of size~$6$ such that each~$G[N^y_i]$ is isomorphic to the letter-$A$-graph,~$N^y_1=V(H)$,~$y\in N^y_t$, and~$N^y_{i+1}$ is obtained from~$N^y_i$ by replacing exactly one vertex. 
We show this claim by induction on~$\ell$. 

If~$\ell = 1$ then~$y\in V(H)$ and the sequence~$N^y_1 = V(H)$ already satisfies the requirements. 
Let~$j$ be the smallest integer such that~$y$ is not uniformly connected (i.e., completely connected or completely disconnected) to~$M_j$
and let~$(N^{y_j}_1,N^{y_j}_2,\ldots,N^{y_j}_t)$ be the corresponding sequence of letter-$A$-graphs for~$y_j$ that exists by induction. Then~$y$ 
is not uniformly connected to~$N^{y_j}_1 \cup \{y_j\}$. Thus there is~$k\in \{1,\ldots,t\}$ such that~$y$ is non-uniformly connected to~$N^{y_j}_k$. 
Since~$y$ is not uniformly connected, Lemma~\ref{lem:extensions:of:A} implies that there exists a vertex~$w \in N^{y_j}_k$ that has in $N^{y_j}_k$ the same neighborhood as~$y$. The sequence~$N^{y_j}_1,\ldots,N^{y_j}_k,N_{k+1} \coloneqq (N^{y_j}_k \cup \{y\})\setminus \{w\}$ satisfies the required properties. 

We now argue that~$v\notin M$. Indeed, it suffices to show that~$v$ is adjacent to all vertices of~$M$. Suppose~$v$ is not adjacent to~$y\in M$ and let~$(N^{y}_1= V(H),\ldots,N^{y}_t)$ be the sequence of sets for~$y$ described above. Since~$v$ is adjacent to all six vertices of~$N^{y}_1$, there is an~$N^{y}_i$ such that~$v$ is adjacent to exactly five vertices of~$N^{y}_i$ which contradicts~Lemma~\ref{lem:extensions:of:A}. This proves Case 1.
\medskip

In the remaining cases, we argue that~$v$ cannot be a false twin. We consider the cases that~$N_G(v) = \overline{N}_H(a_i)$ for some~$i\in \{1,2,3\}$. The cases that~$N_G(v) = \overline{N}_H(b_i)$ then follow by symmetry.

\emph{Case 2:~$N_G(v)\cap V(H) = \{a_1,a_2,b_1\} = \overline{N}_H(a_1)$}. By applying Lemma~\ref{lem:the:extension:lemma} to the graph~$G[V(H)\cup \{v\}]$ we can assume that in~$G$ there is a vertex~$y$ adjacent to~$v$ but not adjacent to~$a_1$ such that the neighborhood of~$y$ in~$V(H)$ is not~$\{a_2,b_1\}$.
The neighborhood of~$y$ in~$V(H)$, i.e.,~$N \coloneqq N_G(y)\cap V(H)$, must be among the admissible extensions. Consider the graph~$G[V(H)\setminus\{a_1\}\cup \{v\}]$ obtained from~$H$ by replacing~$a_1$ with~$v$. The neighborhood of~$y$ in this graph is~$N \cup\{v\}$. The graph~$G[V(H)\setminus\{a_1\}\cup \{v,y\}]$ is a 1-vertex extension of a letter-$A$-graph. Thus the neighborhood of~$y$ in this graph must be obtained from one of the admissible sets by replacing~$a_1$ with~$v$. This implies that~$N' \coloneqq N\cup \{a_1\}$ must be admissible. Inspecting the options for~$N$ and~$N'$ we conclude the only possible choice is~$N = \{b_2\}$ and~$N' = \{a_1,b_2\}$. However, in that case the set~$\{y,a_1,b_1,b_2,v\}$ induces a house.

\emph{Case 3:~$N_G(v)\cap V(H) = \{a_1,a_2,a_3,b_2\} = \overline{N}_H(a_2)$}. In analogy to before, by Lemma~\ref{lem:the:extension:lemma}, we can assume that in~$G$ there is a vertex~$y$ adjacent to~$v$ but not adjacent to~$a_2$ such that~$(N_G(y)\cap V(H)) \setminus \{a_2\}\neq \{a_1,a_3,b_2\}$. We conclude that~$N\coloneqq N_G(y)\cap V(H)$ and~$N' \coloneqq N\cup \{a_2\}$ must be admissible. In this case, the only possible choice is that~$N = \{\}$ and~$N' = \{a_2\}$.
However, in that case the set~$\{y,a_1,a_2,b_2,b_3,v\}$ induces~$X_4$, see Figure~\ref{fig:min_asym_graphs}.

\emph{Case 4:~$N_G(v)\cap V(H) = \{a_3,a_2\} = \overline{N}_H(a_3)$}. Again, by Lemma~\ref{lem:the:extension:lemma}, we can assume that in~$G$ there is a vertex~$y$ adjacent to~$v$ but not adjacent to~$a_3$ such that~$(N_G(y)\cap V(H)) \setminus \{a_3\}\neq \{a_2\}$. We again conclude that~$N\coloneqq N_G(y)\cap V(H)$ and~$N' \coloneqq N\cup \{a_3\}$ must be admissible. 
Then~$N = \{a_1,b_2\}$ and~$N' = \{a_1,b_2,a_3\}$ and in this case the set~$\{a_3,v,y,b_2,b_3\}$ induces a~$P_5$.
\end{proof}

The proof of the lemma thus shows that out of the~$14$ sets that were allegedly admissible only~$7$ (the open neighborhoods and the empty set) remain in question.
Inspecting these~$7$ sets, and observing that the letter-$A$-graph is bipartite, the previous lemma shows that every 1-vertex extension of that graph is also bipartite. Furthermore, every connected 1-vertex extension has a pair of true twins.
Using this we can show that graphs in~$\mathcal{C}$ are bipartite, co-bipartite or split as follows. 

\begin{lemma}
\label{lem:min_asym_A_bipartite}\label{lem:the_smoking_lemma}
Every graph in~$\mathcal{C}$ is bipartite, co-bipartite or split.
\end{lemma}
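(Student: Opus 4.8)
The plan is to show that any graph $G$ in $\mathcal{C}$ that contains a $C_4$ must be bipartite, co-bipartite, or split; combined with the fact (established just before this lemma) that graphs in $\mathcal{C}$ or their complements must contain a letter-$A$-graph, and the symmetry of $\mathcal{C}$ under complementation, this suffices. So assume $G\in\mathcal{C}$ contains an induced letter-$A$-graph $H$, with vertices named as in Figure~\ref{fig:the:A:Graph}.

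\medskip

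The first step is to exploit Lemma~\ref{lem:extensions:of:A:improved}: every vertex $v\in V(G)$ satisfies $N_G(v)\cap V(H)\in\{\emptyset, N_H(a_1), N_H(a_2), N_H(a_3), N_H(b_1), N_H(b_2), N_H(b_3)\}$. Since the letter-$A$-graph is bipartite with parts $A_H=\{a_1,a_2,a_3\}$ and $B_H=\{b_1,b_2,b_3\}$, and each open neighborhood $N_H(u)$ lies entirely in one part, this partitions $V(G)\setminus V(H)$ according to which ``type'' of attachment each vertex has. I would define $A=\{v : N_G(v)\cap V(H)\subseteq B_H, \text{ of type } a_i\}\cup A_H$ and symmetrically $B$; more precisely, put a vertex in $A$ if its trace on $H$ is one of $N_H(a_1),N_H(a_2),N_H(a_3)$ or is a vertex of $A_H$ itself, and in $B$ analogously, and handle the vertices with empty trace separately. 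The key structural claim to prove is: two vertices with the same trace on $H$ are twins relative to $H$ but more importantly that within $A$ (resp.\ within $B$) any two vertices are non-adjacent, or any two are adjacent, or we fall into a controlled situation — i.e., $G[A]$ and $G[B]$ are each ``almost'' cliques or independent sets. Concretely, if $u,u'\in A$ with the same trace on $H$, then $\{u,u'\}$ together with appropriate vertices of $H$ would either force a homogeneous set (contradicting primeness unless they are true/false twins, in which case Lemma~\ref{lem:the:extension:lemma} applies) or an induced $P_5$, $C_5$, or one of the $X_i$ — I would check the (few) cases for each pair of trace-types in $A$ and conclude that $G[A]$ is either edgeless or a clique, and likewise for $G[B]$.

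\medskip

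The second step is to handle the vertices $v$ with $N_G(v)\cap V(H)=\emptyset$. By primeness $G$ is connected, so such a vertex is attached to the rest of the graph through a path; I would argue that one can ``pull $H$ along'' — i.e., that by moving to a different induced copy of the letter-$A$-graph one may assume no such empty-trace vertex exists, or that such vertices are few and can be absorbed into $A$ or $B$. This is where the argument is most delicate, because Lemma~\ref{lem:extensions:of:A:improved} only constrains attachment to one fixed copy $H$. The cleanest route is probably: take $H$ to be an induced letter-$A$-graph chosen so that the number of vertices with empty trace is minimized (or so that $V(H)$ together with the dominating clique from Theorem~\ref{thm:dominating_clique} is as large as possible), then derive a contradiction from the existence of an empty-trace vertex using a ``nearest'' such vertex and Lemma~\ref{lem:4cycle:house:domnino} or the dominating-clique theorem to relocate $H$.

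\medskip

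Once $G[A]$ and $G[B]$ are each shown to be a clique or an independent set, the conclusion is immediate: if both are independent, $G$ is bipartite; if both are cliques, $G$ is co-bipartite; if one is a clique and the other independent, $G$ is split. The main obstacle I anticipate is precisely the bookkeeping in the second step — controlling vertices that miss $H$ entirely — since Lemmas~\ref{lem:extensions:of:A} and~\ref{lem:extensions:of:A:improved} are stated only for a fixed copy $H$, so one must either iterate them along a shortest path to an empty-trace vertex or invoke connectivity/domination cleverly to force every vertex to attach to some translate of the letter-$A$-graph. The first step (that each side is a clique or independent set, given no empty-trace vertices) should reduce to a finite case analysis over the $7$ admissible trace-types, entirely analogous to the case analysis in the proof of Lemma~\ref{lem:extensions:of:A}.
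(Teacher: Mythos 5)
Your proposal is a plan rather than a proof, and both of its pillars have real gaps. First, a concrete structural error: the letter-$A$-graph is \emph{not} bipartite with parts $\{a_1,a_2,a_3\}$ and $\{b_1,b_2,b_3\}$ --- each of these sets induces a $P_3$. Its bipartition is $\{a_1,a_3,b_2\}$ versus $\{a_2,b_1,b_3\}$, so your classification of trace-types into an ``$a$-side'' and a ``$b$-side'' does not line up with any bipartition of $H$, and the intended statement ``each open neighborhood lies in one part'' only holds after this is corrected. More seriously, the two steps that would carry the argument are left open: the claim that each side is a clique or an independent set is only asserted (``I would check the cases''), and the treatment of vertices with empty trace on $H$ is explicitly deferred to a vague ``pull $H$ along'' scheme. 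That second issue is not mere bookkeeping: Lemma~\ref{lem:extensions:of:A:improved} constrains attachment only to one fixed copy of $H$, and nothing in your sketch shows how an empty-trace vertex, or a vertex far from $H$, gets assigned to a side of a global bipartition. A minimality choice of $H$ by itself does not obviously yield a contradiction, and you give no argument for one.

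The paper's proof avoids building a bipartition at all. Since graphs in $\mathcal{C}$ are $(P_5,C_5)$-free, they have no induced odd cycles of length at least $5$, so once $G$ (not split, and after complementing if necessary) is shown to contain a letter-$A$-graph $H$, bipartiteness reduces to triangle-freeness. A hypothetical triangle $T$ is then handled by a minimal-distance argument: choosing copies of $H$ and $T$ as close as possible and using Lemma~\ref{lem:extensions:of:A:improved} to replace a vertex of $H$ by a twin along a shortest path, one forces $H$ and $T$ to meet in exactly one vertex, and then the same lemma shows a second triangle vertex would have to be adjacent to a vertex of $H$ whose open neighborhood it copies --- a contradiction. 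This ``move $H$ toward the obstruction'' idea is exactly the iteration you gesture at for empty-trace vertices, but applied only where it is needed (to a triangle), which is what makes the argument close. If you want to salvage your approach, you would need to prove both of your deferred claims, and the cleanest way to do so essentially collapses into the paper's triangle-freeness argument.
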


\begin{proof} 
Let~$G$ be a graph in~$\mathcal{C}$ that is not split. Since~$G$ is~$C_5$-free and not split,~$G$ or its complement contain a~$C_4$~\cite{MR0505860}. We may assume the former. Since~$G$ is prime, Lemma~\ref{lem:4cycle:house:domnino} implies that~$G$ contains an induced subgraph~$H$ isomorphic to the letter-$A$-graph.
Since~$G$ does not contain a~$C_5$ or a~$P_5$, it does not contain odd induced cycles of length at least~5. To show that~$G$ is bipartite it thus suffices to show that~$G$ is triangle-free.

Suppose~$G$ contains a triangle~$T$.
Since~$H$ is triangle-free, not all vertices of~$T$ can be vertices of~$H$. Moreover, by Lemma~\ref{lem:extensions:of:A:improved} no 1-vertex extension of~$H$ has a triangle. 
We conclude that~$T$ and~$H$ intersect in at most one vertex. 

We now choose isomorphic copies of~$H$ and~$T$ to be as close as possible (i.e., such that the shortest distance from a vertex in~$H$ to a vertex in~$T$ is minimal). Then, by induction we see that~$V(H)$ and~$V(T)$ intersect as follows. Assuming they do not intersect, let~$h\in V(H)$ and~$t\in V(T)$ be vertices that minimize the distance between~$H$ and~$T$. Let~$y$ be the neighbor of~$h$ on a shortest path from~$h$ to~$t$. By Lemma~\ref{lem:extensions:of:A:improved}, in~$G[V(H)\cup \{y\}]$,~$y$ is a twin of some vertex~$w\in V(H)$. Then~$(V(H)\setminus \{w\}) \cup \{y\}$ induces a copy of the~letter-$A$-graph that is closer to~$T$ than~$H$. We can therefore assume that~$H$ and~$T$ intersect in~$h = t$. 

Let~$t'$ be another vertex in~$T$. By Lemma~\ref{lem:extensions:of:A:improved}, in~$G[V(H)\cup \{t'\}]$, the vertex~$t'$ has the same neighbor as some other vertex~$h'\in H$. If~$h'\neq t$, then the graph~$G[V(H)\setminus\{h'\} \cup\{t'\}]$ is an induced copy of the~letter-$A$-graph that shares two vertices with~$T$, a possibility that we already ruled out.
We conclude that~$t'$ and~$t = h'$ have the same neighborhood in~$H$. Moreover, they are adjacent. However, this contradicts Lemma~\ref{lem:extensions:of:A:improved}. \end{proof}

We have assembled the required information about~$\mathcal{C}$ to prove the main theorems.

\begin{proof}[Proof of Theorems~\ref{thm:min_asym_graphs} and~\ref{thm:min:inv:free}]
Lemmas~\ref{lem:bipartite_implies_P5_in_C} and~\ref{lem:not_split_in_C} in conjunction with Lemma~\ref{lem:min_asym_A_bipartite} show that the set~$\mathcal{C}$ of minimal involution-free graphs not depicted in Figure~\ref{fig:min_asym_graphs} is empty. This proves Theorem~\ref{thm:min:inv:free}. Since all graphs in Figure~\ref{fig:min_asym_graphs} are asymmetric, Lemma~\ref{lem:min:inv:free:impl:asym} implies that the minimal involution-free graphs are exactly the minimal asymmetric graphs. This proves Theorem~\ref{thm:min_asym_graphs}.
\end{proof}


\bibliographystyle{abbrv}
\bibliography{main}

\end{document}